\newtheorem{theorem}{Theorem}[section]
\newtheorem{lemma}[theorem]{Lemma}
\newtheorem{corollary}[theorem]{Corollary}
\newtheorem{proposition}[theorem]{Proposition}
\theoremstyle{definition}
\newtheorem{definition}[theorem]{Definition}
\newtheorem{example}[theorem]{Example}
\newtheorem{remark}[theorem]{Remark}
\newtheorem{conjecture}[theorem]{Conjecture}
\numberwithin{equation}{theorem}
\def\cm{\divideontimes}
\def\bar{\overline}
\def\phi{\varphi}
\def\to{\longrightarrow}
\def\mapsto{\longmapsto}
\renewcommand{\mod}{\,\operatorname{mod}\,}
\def\End{\operatorname{End}}
\def\Hom{\operatorname{Hom}}
\def\reg{\operatorname{reg}}
\def\hat{\widehat}
\def\ge{\geqslant}
\def\le{\leqslant}
\def\bsz{{\boldsymbol{z}}}
\def\bslambda{{\boldsymbol{\lambda}}}
\def\fraka{{\mathfrak a}}
\def\frakm{{\mathfrak m}}
\def\frakp{{\mathfrak p}}
\def\FF{{\mathbb F}}
\def\NN{{\mathbb N}}
\def\QQ{{\mathbb Q}}
\def\ZZ{{\mathbb Z}}
\def\calC{{\mathcal C}}
\def\calF{{\mathcal F}}
\def\calR{{\mathcal R}}
\def\calT{{\mathcal T}}
\begin{document}
\title{Rings of Frobenius operators}

\author{Mordechai Katzman}
\address{Department of Pure Mathematics, University of Sheffield, Hicks Building, Sheffield,~S3~7RH,~UK}
\email{M.Katzman@sheffield.ac.uk}

\author{Karl Schwede}
\address{Department of Mathematics, The Pennsylvania State University, University Park, PA~16802,~USA}
\email{schwede@math.psu.edu}

\author{Anurag K. Singh}
\address{Department of Mathematics, University of Utah, 155 S 1400 E, Salt Lake City, UT~84112,~USA}
\email{singh@math.utah.edu}

\author{Wenliang Zhang}
\address{Department of Mathematics, University of Nebraska, Lincoln, NE~68505,~USA}
\email{wzhang15@unl.edu}

\thanks{M.K.~was supported by EPSRC grant EP/I031405/1, K.S.~by NSF grant DMS~1064485 and a Sloan Fellowship, A.K.S.~by NSF grant DMS~1162585, and W.Z.~by NSF grant DMS~1247354.}

\date{\today}
\subjclass[2010]{Primary 13A35; Secondary 13C11, 13D45, 13C40}

\begin{abstract}
Let $R$ be a local ring of prime characteristic. We study the ring of Frobenius operators $\calF(E)$, where $E$ is the injective hull of the residue field of $R$. In particular, we examine the finite generation of $\calF(E)$ over its degree zero component $\calF^0(E)$, and show that $\calF(E)$ need not be finitely generated when $R$ is a determinantal ring; nonetheless, we obtain concrete descriptions of $\calF(E)$ in good generality that we use, for example, to prove the discreteness of $F$-jumping numbers for arbitrary ideals in determinantal rings.
\end{abstract}
\maketitle

\section{Introduction}

Lyubeznik and Smith~\cite{LyubeznikSmith} initiated the systematic study of rings of Frobenius operators and their applications to tight closure theory. Our focus here is on the Frobenius operators on the injective hull of $R/\frakm$, when $(R,\frakm)$ is a complete local ring of prime characteristic.

\begin{definition}
Let $R$ be a ring of prime characteristic $p$, with Frobenius endomorphism~$F$. Following \cite[Section~3]{LyubeznikSmith}, we set $R\{F^e\}$ to be the ring extension of $R$ obtained by adjoining a noncommutative variable $\chi$ subject to the relations $\chi r=r^{p^e}\chi$ for all $r\in R$.

Let $M$ be an $R$-module. Extending the $R$-module structure on $M$ to an $R\{F^e\}$-module structure is equivalent to specifying an additive map $\phi\colon M\to M$ that satisfies
\[
\phi(rm)\ =\ r^{p^e}\phi(m)\qquad\text{ for each }r\in R\text{ and }m\in M\,.
\]
Define $\calF^e(M)$ to be the set of $R\{F^e\}$-module structures on $M$; this is an Abelian group with a left $R$-module structure, where $r\in R$ acts on $\phi\in\calF^e(M)$ to give the composition~$r\circ\phi$. Given elements $\phi\in\calF^e(M)$ and $\phi'\in\calF^{e'}(M)$, the compositions $\phi\circ\phi'$ and $\phi'\circ\phi$ are elements of the module $\calF^{e+e'}(M)$. Thus,
\[
\calF(M)\ =\ \calF^0(M)\oplus\calF^1(M)\oplus\calF^2(M)\oplus\cdots
\]
has a ring structure; this is the \emph{ring of Frobenius operators} on $M$.
\end{definition}

Note that $\calF(M)$ is an $\NN$-graded ring; it is typically not commutative. The degree~$0$ component $\calF^0(M)=\End_R(M)$ is a subring, with a natural $R$-algebra structure. Lyubeznik and Smith~\cite[Section~3]{LyubeznikSmith} ask whether~$\calF(M)$ is a finitely generated ring extension of $\calF^0(M)$. From the point of view of tight closure theory, the main cases of interest are where $(R,\frakm)$ is a complete local ring, and the module~$M$ is the local cohomology module $H^{\dim R}_\frakm(R)$ or the injective hull of the residue field, $E_R(R/\frakm)$, abbreviated~$E$ in the following discussion. In the former case, the algebra $\calF(M)$ is finitely generated under mild hypotheses, see Example~\ref{example:intro}.\ref{example:lc}; an investigation of the latter case is our main focus here.

It follows from Example~\ref{example:intro}.\ref{example:lc} that for a Gorenstein complete local ring $(R,\frakm)$, the ring~$\calF(E)$ is a finitely generated extension of $\calF^0(E)\cong R$. This need not be true when~$R$ is not Gorenstein: Katzman~\cite{Katzman} constructed the first such examples. In Section~\ref{sec:ring:structure} we study the finite generation of $\calF(E)$, and provide descriptions of $\calF(E)$ even when it is not finitely generated: this is in terms of graded subgroup of the anticanonical cover of $R$, with a Frobenius-twisted multiplication structure, see Theorem~\ref{theorem:main}.

Section~\ref{sec:q:gorenstein} studies the case of $\QQ$-Gorenstein rings. We show that $\calF(E)$ is finitely generated (though not necessarily principally generated) if $R$ is $\QQ$-Gorenstein with index relatively prime to the characteristic, Proposition~\ref{prop:q:gor}; the dual statement for the Cartier algebra was previously obtained by Schwede in~\cite[Remark~4.5]{Schwede}. We also construct a~$\QQ$-Gorenstein ring for which the ring $\calF(E)$ is \emph{not} finitely generated over $\calF^0(E)$; in fact, we conjecture that this is always the case for a $\QQ$-Gorenstein ring whose index is a multiple of the characteristic, see Conjecture~\ref{conj:Q:gor}.

In Section~\ref{sec:determinantal} we show that $\calF(E)$ need not be finitely generated for determinantal rings, specifically for the ring~$\FF[X]/I$, where $X$ is a $2\times 3$ matrix of variables, and $I$ is the ideal generated by its size $2$ minors; this proves a conjecture of Katzman,~\cite[Conjecture~3.1]{Katzman}. The relevant calculations also extend a result of Fedder,~\cite[Proposition~4.7]{Fedder}.

One of the applications of our study of $\calF(E)$ is the discreteness of $F$-jumping numbers; in Section~\ref{sec:gauge} we use the description of $\calF(E)$, combined with the notion of gauge boundedness, due to Blickle~\cite{Blickle:JAG}, to obtain positive results on the discreteness of $F$-jumping numbers for new classes of rings including determinantal rings, see Theorem~\ref{theorem:gauge}. In the last section, we obtain results on the linear growth of Castelnuovo-Mumford regularity for rings with finite Frobenius representation type; this is also with an eye towards the discreteness of $F$-jumping numbers.

To set the stage, we summarize some previous results on the rings $\calF(M)$.

\begin{example}
\label{example:intro}
Let $R$ be a ring of prime characteristic.
\begin{enumerate}[\rm(1)]
\item For each $e\ge0$, the left $R$-module $\calF^e(R)$ is free of rank one, spanned by $F^e$; this is~\cite[Example~3.6]{LyubeznikSmith}. Hence, $\calF(R)\cong R\{F\}$.

\smallskip
\item\label{example:lc}
Let $(R,\frakm)$ be a local ring of dimension $d$. The Frobenius endomorphism~$F$ of $R$ induces, by functoriality, an additive map
\[
F\colon H^d_\frakm(R)\to H^d_\frakm(R)\,,
\]
which is the natural \emph{Frobenius action} on $H^d_\frakm(R)$. If the ring $R$ is complete and $S_2$, then~$\calF^e(H^d_\frakm(R))$ is a free left $R$-module of rank one, spanned by $F^e$; for a proof of this, see~\cite[Example~3.7]{LyubeznikSmith}. It follows that
\[
\calF\big(H^d_\frakm(R)\big)\ \cong\ R\{F\}\,.
\]
In particular, $\calF(H^d_\frakm(R))$ is a finitely generated ring extension of $\calF^0(H^d_\frakm(R))$.

\smallskip
\item Consider the local ring $R=\FF[[x,y,z]]/(xy,yz)$ where $\FF$ is a field, and set $E$ to be the injective hull of the residue field of $R$. Katzman~\cite{Katzman} proved that $\calF(E)$ is not a finitely generated ring extension of $\calF^0(E)$.

\smallskip
\item Let $(R,\frakm)$ be the completion of a Stanley-Reisner ring at its homogeneous maximal ideal, and let $E$ be the injective hull of $R/\frakm$. In
\cite{ABZ} \`Alvarez, Boix, and Zarzuela obtain necessary and sufficient conditions for the finite generation of $\calF(E)$. Their work yields, in particular, Cohen-Macaulay examples where $\calF(E)$ is not finitely generated over $\calF^0(E)$. By \cite[Theorem~3.5]{ABZ}, $\calF(E)$ is either $1$-generated or infinitely generated as a ring extension of $\calF^0(E)$ in the Stanley-Reisner case.
\end{enumerate}
\end{example}

\begin{remark}
\label{remark:adjoint}
Let $R^{(e)}$ denote the $R$-bimodule that agrees with $R$ as a left $R$-module, and where the right module structure is given by
\[
x \cdot r\ =\ r^{p^e}x\qquad\text{ for all }r\in R\text{ and }x\in R^{(e)}\,.
\]
For each $R$-module $M$, one then has a natural isomorphism
\[
\calF^e(M)\ \cong\ \Hom_R\big(R^{(e)}\otimes_RM,\ M\big)
\]
where $\phi\in\calF^e(M)$ corresponds to $x\otimes m\mapsto x\phi(m)$ and $\psi\in\Hom_R(R^{(e)}\otimes_RM,\ M)$ corresponds to $m\mapsto\psi(1\otimes m)$; see~\cite[Remark~3.2]{LyubeznikSmith}.
\end{remark}

\begin{remark}
Let $R$ be a Noetherian ring of prime characteristic. If $M$ is a Noetherian $R$-module, or if $R$ is complete local and $M$ is an Artinian $R$-module, then each graded component~$\calF^e(M)$ of~$\calF(M)$ is a finitely generated left $R$-module, and hence also a finitely generated left $\calF^0(M)$-module; this is~\cite[Proposition~3.3]{LyubeznikSmith}.
\end{remark}

\begin{remark}
\label{remark:blickle}
Let $R$ be a complete local ring of prime characteristic $p$; set $E$ to be the injective hull of the residue field of $R$. Let $A$ be a complete regular local ring with $R=A/I$. By \cite[Proposition~3.36]{Blickle:thesis}, one then has an isomorphism of $R$-modules
\[
\calF^e(E)\ \cong\ \frac{I^{[p^e]}:_AI}{I^{[p^e]}}\,.
\]
\end{remark}

\section{Twisted multiplication}
\label{sec:twisted}

Let $R$ be a complete local ring of prime characteristic; let $E$ denote the injective hull of the residue field of $R$. In Theorem~\ref{theorem:main} we prove that $\calF(E)$ is isomorphic to a subgroup of the anticanonical cover of $R$, with a twisted multiplication structure; in this section, we describe this twisted construction in broad generality:

\begin{definition}
\label{defn:cm}
Given an $\NN$-graded commutative ring $\calR$ of prime characteristic $p$, we define a new ring~$\calT(\calR)$ as follows: Consider the Abelian group
\[
\calT(\calR)\ =\ \bigoplus_{e\ge0}\calR_{p^e-1}\,,
\]
and define a multiplication $\cm$ on $\calT(\calR)$ by
\[
a\cm b\ =\ ab^{p^e}\qquad\text{ for }a\in{\calT(\calR)}_e\text{ and }b\in{\calT(\calR)}_{e'}\,.
\]
\end{definition}

It is a straightforward verification that $\cm$ is an associative binary operation; the prime characteristic assumption is used in verifying that $+$ and $\cm$ are distributive. Moreover, for elements $a\in{\calT(\calR)}_e$ and $b\in{\calT(\calR)}_{e'}$ one has
\[
ab^{p^e}\ \in\ \calR_{p^e-1+p^e(p^{e'}-1)}=\calR_{p^{e+{e'}}-1}\,,
\]
and hence
\[
\calT(\calR)_e\cm\calT(\calR)_{e'}\ \subseteq\ \calT(\calR)_{e+e'}\,.
\]
Thus, $\calT(\calR)$ is an $\NN$-graded ring; we abbreviate its degree $e$ component ${\calT(\calR)}_e$ as $\calT_e$. The ring $\calT(\calR)$ is typically not commutative, and need not be a finitely generated extension ring of $\calT_0$ even when~$\calR$ is Noetherian:

\begin{example}
\label{example:cm}
We examine $\calT(\calR)$ when $\calR$ is a standard graded polynomial ring over a field $\FF$. We show that $\calT(\calR)$ is a finitely generated ring extension of~$\calT_0=\FF$ if $\dim\calR\le 2$, and that $\calT(\calR)$ is not finitely generated if $\dim\calR\ge 3$.

\begin{enumerate}[\rm(1)]
\item If $\calR$ is a polynomial ring of dimension $1$, then $\calT(\calR)$ is commutative and finitely generated over $\FF$: take $\calR=\FF[x]$, in which case $\calT_e=\FF\cdot x^{p^e-1}$ and
\[
x^{p^e-1}\cm x^{p^{e'}-1}\ =\ x^{p^{e+e'}-1}\ =\ x^{p^{e'}-1}\cm x^{p^e-1}\,.
\]
Thus, $\calT(\calR)$ is a polynomial ring in one variable.

\smallskip
\item When $\calR$ is a polynomial ring of dimension $2$, we verify that $\calT(\calR)$ is a noncommutative finitely generated ring extension of $\FF$. Let $\calR=\FF[x,y]$. Then
\[
x^{p-1}\cm y^{p-1}\ =\ x^{p-1}y^{p^2-p} \qquad\text{ whereas }\qquad
y^{p-1}\cm x^{p-1}\ =\ x^{p^2-p}y^{p-1}\,,
\]
so $\calT(\calR)$ is not commutative. For finite generation, it suffices to show that
\[
\calT_{e+1}\ =\ \calT_1\cm\calT_e\qquad\text{ for each }e\ge1\,.
\]
Set $q=p^e$ and consider the elements
\[
x^iy^{p-1-i}\in\calT_1\,,\quad{0\le i\le p-1}\quad\text{ and }\quad
x^jy^{q-1-j}\in\calT_e\,,\quad{0\le j\le q-1}\,.
\]
Then $\calT_1\cm\calT_e$ contains the elements
\[
\big(x^iy^{p-1-i}\big)\cm\big(x^jy^{q-1-j}\big)\ =\ x^{i+pj}y^{pq-pj-i-1}\,,
\]
for $0\le i\le p-1$ and $0\le j\le q-1$, and these are readily seen to span $\calT_{e+1}$. Hence, the degree $p-1$ monomials in $x$ and $y$ generate $\calT(\calR)$ as a ring extension of $\FF$.

\smallskip
\item\label{example:dim3}
For a polynomial ring $\calR$ of dimension $3$ or higher, the ring $\calT(\calR)$ is noncommutative and not finitely generated over $\FF$. The noncommutativity is immediate from (2); we give an argument that $\calT(\calR)$ is not finitely generated for $\calR=\FF[x,y,z]$, and this carries over to polynomial rings $\calR$ of higher dimension.

Set $q=p^e$ where $e\ge 2$. We claim that the element
\[
xy^{q/p-1}z^{q-q/p-1}\ \in\ \calT_e
\]
does not belong to $\calT_{e_1}\cm\calT_{e_2}$ for integers $e_i<e$ with $e_1+e_2=e$. Indeed, $\calT_{e_1}\cm\calT_{e_2}$ is spanned by the monomials
\[
\big(x^iy^jz^{q_1-i-j-1}\big)\cm\big(x^ky^lz^{q_2-k-l-1}\big)\ =\
x^{i+q_1k}y^{j+q_1l}z^{q-i-j-q_1k-q_1l-1}
\]
where $q_i=p^{e_i}$ and
\begin{align*}
0\le i\le q_1-1\,,\qquad &\qquad 0\le j\le q_1-1-i\,,\\
0\le k\le q_2-1\,,\qquad &\qquad 0\le l\le q_2-1-k\,,
\end{align*}
so it suffices to verify that the equations
\[
i+q_1k=1\qquad\text{ and }\qquad j+q_1l=q/p-1
\]
have no solution for integers $i,j,k,l$ in the intervals displayed above. The first of the equations gives $i=1$, which then implies that $0\le j\le q_1-2$. Since $q_1$ divides $q/p$, the second equation gives $j\equiv-1\mod q_1$. But this has no solution with $0\le j\le q_1-2$.
\end{enumerate}
\end{example}

\section{The ring structure of $\calF(E)$}
\label{sec:ring:structure}

We describe the ring of Frobenius operators $\calF(E)$ in terms of the symbolic Rees algebra~$\calR$ and the twisted multiplication structure $\calT(\calR)$ of the previous section. First, a notational point: $\omega^{[p^e]}$ below denotes the iterated Frobenius power of an ideal $\omega$, and $\omega^{(n)}$ its symbolic power, which coincides with reflexive power for divisorial ideals $\omega$. We realize that the notation $\omega^{[n]}$ is sometimes used for the reflexive power, hence this note of caution. We start with the following observation:

\begin{lemma}
\label{lemma:symb:frob}
Let $(R,\frakm)$ be a normal local ring of characteristic $p>0$. Let $\omega$ be a divisorial ideal of $R$, i.e., an ideal of pure height one. Then for each integer $e\ge1$, the map
\[
H^{\dim R}_\frakm\big(\omega^{[p^e]}\big)\to H^{\dim R}_\frakm\big(\omega^{(p^e)}\big)
\]
induced by the inclusion $\omega^{[p^e]}\subseteq\omega^{(p^e)}$, is an isomorphism.
\end{lemma}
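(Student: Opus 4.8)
The plan is to reduce the statement to a standard fact about local cohomology of modules differing in codimension $\ge 2$, namely that if $N \subseteq M$ are $R$-modules with $\operatorname{Supp}(M/N)$ of dimension $\le \dim R - 2$, then the induced map $H^{\dim R}_\frakm(N) \to H^{\dim R}_\frakm(M)$ is an isomorphism (both are the top local cohomology, so the cokernel $H^{\dim R}_\frakm(M/N)$ and the relevant $H^{\dim R - 1}_\frakm(M/N)$ vanish by Grothendieck vanishing once $\dim M/N \le \dim R - 2$). Applying this with $N = \omega^{[p^e]}$ and $M = \omega^{(p^e)}$, the lemma follows once I show that the quotient $\omega^{(p^e)}/\omega^{[p^e]}$ is supported in codimension at least $2$.

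So the heart of the argument is a local, codimension-one check. First I would recall that since $R$ is normal, $\omega$ divisorial means $\omega$ is reflexive of rank one, hence locally free (in fact principal, since a normal local ring of dimension one is a DVR, and a height-one localization of $R$ is such) on the punctured spectrum in codimension one; more precisely, for every prime $\frakp$ with $\operatorname{ht}\frakp = 1$, the localization $R_\frakp$ is a discrete valuation ring and $\omega R_\frakp$ is principal, say $\omega R_\frakp = (t)$. Then I need that at such $\frakp$ the two powers agree: $(\omega^{[p^e]})_\frakp = (\omega^{(p^e)})_\frakp$. The symbolic power $\omega^{(p^e)}$ is by definition (or by the stated identification with the reflexive power) the pure-height-one part of the ordinary power, so localizing at a height-one prime kills the embedded/higher-codimension contributions and gives $(\omega^{(p^e)})_\frakp = (\omega R_\frakp)^{p^e} = (t^{p^e})$. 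On the other hand $\omega^{[p^e]}$ is generated by $p^e$-th powers of generators of $\omega$, so $(\omega^{[p^e]})_\frakp$ is generated by $p^e$-th powers of generators of $(t)$, which over the DVR $R_\frakp$ is again $(t^{p^e})$. Hence the two ideals localize equally at every height-one prime, so $\operatorname{Supp}(\omega^{(p^e)}/\omega^{[p^e]})$ contains no height-one prime, i.e.\ has codimension $\ge 2$ (one should also note the containment $\omega^{[p^e]} \subseteq \omega^{(p^e)}$ is genuine: $\omega^{[p^e]} \subseteq \omega^{p^e}$ and the latter sits inside its pure-height-one part $\omega^{(p^e)}$).

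The step I expect to require the most care is the codimension-one localization identity $(\omega^{(p^e)})_\frakp = (\omega R_\frakp)^{p^e}$ together with pinning down exactly what $\omega^{(p^e)}$ means here. If $\omega^{(p^e)}$ denotes the symbolic power in the usual primary-decomposition sense, one must argue that $\omega$ has no embedded primes and that the only height-one primes in its support are the minimal primes of $\omega$ itself, so that localizing at those recovers the ordinary power while localizing at any other height-one prime makes both ideals the unit ideal — this uses normality and the fact that a divisorial ideal is unmixed of height one. If instead $\omega^{(n)}$ is taken as the reflexive power $(\omega^n)^{**}$, then the identity is even cleaner: reflexification does not change a module in codimension $\ge 2$, and over the DVR $R_\frakp$ every finitely generated module is already reflexive, so $((\omega^n)^{**})_\frakp = (\omega_\frakp)^n$ directly. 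Once that local identity is in hand, the passage to local cohomology via Grothendieck vanishing is routine, and the proof is complete.
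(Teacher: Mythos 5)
Your proof is correct and follows the same route as the paper's: localize at height-one primes to observe that $\omega R_\frakp$ is principal over the DVR $R_\frakp$, so $(\omega^{[p^e]})_\frakp = (\omega^{(p^e)})_\frakp$, hence the quotient has dimension $\le \dim R - 2$, and then Grothendieck vanishing applied to the long exact sequence in local cohomology gives the isomorphism. The extra detail you supply on the codimension-one identification and on disambiguating the meaning of the symbolic/reflexive power is a harmless elaboration of the paper's one-line justification.
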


\begin{proof}
Set $d=\dim R$. Since $R$ is normal and $\omega$ has pure height one, $\omega R_\frakp$ is principal for each prime ideal $\frakp$ of height one; hence $\big(\omega^{(p^e)}/\omega^{[p^e]}\big)R_\frakp=0$. It follows that
\[
\dim\big(\omega^{(p^e)}/\omega^{[p^e]}\big)\ \le\ d-2\,,
\]
which gives the vanishing of the outer terms of the exact sequence
\[
\CD
H^{d-1}_\frakm(\omega^{(p^e)}/\omega^{[p^e]})@>>> H^d_\frakm(\omega^{[p^e]})
@>>> H^d_\frakm(\omega^{(p^e)})@>>> H^d_\frakm(\omega^{(p^e)}/\omega^{[p^e]})\,,
\endCD
\]
and thus the desired isomorphism.
\end{proof}

\begin{definition}
Let $R$ be a normal ring that is either complete local, or $\NN$-graded and finitely generated over $R_0$. Let $\omega$ denote the canonical module of $R$. The symbolic Rees algebra
\[
\calR\ =\ \bigoplus_{n\ge0}\omega^{(-n)}
\]
is the \emph{anticanonical cover} of $R$; it has a natural $\NN$-grading where $\calR_n=\omega^{(-n)}$.
\end{definition}

\begin{theorem}
\label{theorem:main}
Let $(R,\frakm)$ be a normal complete local ring of characteristic $p>0$. Set $d$ to be the dimension of $R$. Let $\omega$ denote the canonical module of $R$, and identify $E$, the injective hull of the $R/\frakm$, with $H^d_\frakm(\omega)$.

\begin{enumerate}[\ \rm(1)]
\item Then $\calF(E)$, the ring of Frobenius operators on $E$, may be identified with
\[
\bigoplus_{e\ge0}\omega^{(1-p^e)}\,F^e\,,
\]
where $F^e$ denotes the map $H^d_\frakm(\omega)\to H^d_\frakm(\omega^{(p^e)})$ induced by $\omega\to\omega^{[p^e]}$.

\item Let $\calR$ be the anticanonical cover of $R$. Then one has an isomorphism of graded rings
\[
\calF(E)\ \cong\ \calT(\calR)\,,
\]
where $\calT(\calR)$ is as in Definition~\ref{defn:cm}.
\end{enumerate}
\end{theorem}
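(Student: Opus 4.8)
The plan is to exhibit the identification in part~(1) as a composite of natural isomorphisms, and then to obtain part~(2) by tracking the multiplication through that composite. As for the setup: over the complete local ring $R$ one has $H^d_\frakm(M)^\vee\cong\Hom_R(M,\omega)$ for every finitely generated $R$-module $M$ (the top cohomological degree being the harmless corner of the relevant hyperext spectral sequence), and applying this with $M=\omega$, together with the fact that $R\to\Hom_R(\omega,\omega)$ is an isomorphism because $R$ is normal (hence $S_2$), gives $H^d_\frakm(\omega)^\vee\cong R$; this is the identification $E\cong H^d_\frakm(\omega)$ used throughout.

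Now fix $e\ge0$. By Remark~\ref{remark:adjoint}, $\calF^e(E)\cong\Hom_R(R^{(e)}\otimes_RE,\ E)$, so the first task is to compute $R^{(e)}\otimes_RE$. Writing $H^d_\frakm$ as the top cohomology of a \v{C}ech complex and using that $R^{(e)}\otimes_R-$ is right exact identifies $R^{(e)}\otimes_RH^d_\frakm(\omega)$ with $H^d_\frakm(R^{(e)}\otimes_R\omega)$. The natural surjection $R^{(e)}\otimes_R\omega\twoheadrightarrow\omega^{[p^e]}$, $x\otimes f\mapsto xf^{p^e}$, is an isomorphism at every prime of height $\le1$ — this is where normality enters, since at such a prime $\omega$ is free — so its kernel has dimension $\le d-2$ and it induces an isomorphism on $H^d_\frakm$; together with Lemma~\ref{lemma:symb:frob} this yields $R^{(e)}\otimes_RE\cong H^d_\frakm(\omega^{[p^e]})\cong H^d_\frakm(\omega^{(p^e)})$, under which $x\otimes\eta$ corresponds to $x\cdot F^e(\eta)$ with $F^e$ the map named in the theorem. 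Feeding this back, and using Matlis duality, local duality once more, and the identity $\Hom_R(\omega^{(p^e)},\omega)\cong\omega^{(1-p^e)}$ for divisorial ideals over a normal ring (the divisorial ideal of class $[\omega]-p^e[\omega]$), we get
\[
\calF^e(E)\ \cong\ \Hom_R\!\big(H^d_\frakm(\omega^{(p^e)}),\,E\big)\ =\ H^d_\frakm\!\big(\omega^{(p^e)}\big)^\vee\ \cong\ \Hom_R\!\big(\omega^{(p^e)},\,\omega\big)\ \cong\ \omega^{(1-p^e)}\ .
\]
Tracing an element $c\in\omega^{(1-p^e)}$ backwards — it is multiplication $\omega^{(p^e)}\to\omega$, which corresponds under local duality to multiplication $H^d_\frakm(\omega^{(p^e)})\to E$, hence to the functional $x\otimes\eta\mapsto xcF^e(\eta)$, i.e.\ to the operator $\eta\mapsto cF^e(\eta)$ — shows the composite is precisely $c\mapsto cF^e$. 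Summing over $e$ proves part~(1).

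For part~(2) it remains to match the multiplications. The maps $F^e\colon H^d_\frakm(\fraka)\to H^d_\frakm(\fraka^{(p^e)})$ make sense for every fractional divisorial ideal $\fraka$ via the $p^e$-power map $\fraka\to\fraka^{[p^e]}\subseteq\fraka^{(p^e)}$ (using Lemma~\ref{lemma:symb:frob}), and two elementary diagram chases give $F^e\circ F^{e'}=F^{e+e'}$ and $F^e\circ(\cdot\,c)=(\cdot\,c^{p^e})\circ F^e$ for $c$ in the fraction field. Hence, for $c\in\omega^{(1-p^e)}$ and $c'\in\omega^{(1-p^{e'})}$,
\[
(cF^e)\circ(c'F^{e'})\ =\ c\cdot F^e\!\big(c'\cdot F^{e'}(-)\big)\ =\ c\,(c')^{p^e}\,F^{e+e'}\ ,
\]
which lies in $\omega^{(1-p^{e+e'})}F^{e+e'}$. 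On the other side, $\calT(\calR)$ with $\calR_n=\omega^{(-n)}$ has $\calT_e=\calR_{p^e-1}=\omega^{(1-p^e)}$ and $\cm$-product $c\cm c'=c(c')^{p^e}$ by Definition~\ref{defn:cm}. Thus $cF^e\leftrightarrow c$ is an isomorphism of graded rings $\calF(E)\cong\calT(\calR)$.

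I expect the one genuine obstacle to be the identification $R^{(e)}\otimes_RE\cong H^d_\frakm(\omega^{(p^e)})$: one must check that $R^{(e)}\otimes_R\omega\to\omega^{[p^e]}$ fails to be an isomorphism only in codimension $\ge2$ — this is exactly where the normality hypothesis is used, and the statement is false without it — and one must keep careful track of the maps through the whole chain of natural isomorphisms so that the abstract isomorphism $\calF^e(E)\cong\omega^{(1-p^e)}$ is seen to be realized by $c\mapsto cF^e$; it is this bookkeeping that makes part~(2) fall out of part~(1).
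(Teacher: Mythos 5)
Your argument is correct, and its overall skeleton matches the paper's: reduce via Remark~\ref{remark:adjoint} to computing $\Hom_R(R^{(e)}\otimes_RE,\,E)$, identify $R^{(e)}\otimes_RE$ with $H^d_\frakm(\omega^{(p^e)})$ using right exactness, the codimension-$\ge2$ control that normality provides, and Lemma~\ref{lemma:symb:frob}, and then verify that the identification sends composition to the twisted product $\cm$. The one place where you diverge is the step from $\Hom_R(H^d_\frakm(\omega^{(p^e)}),E)$ to $\omega^{(1-p^e)}$. You invoke local duality in the form $H^d_\frakm(M)^\vee\cong\Hom_R(M,\omega)$ (valid without any Cohen--Macaulay hypothesis, exactly because $d$ is the top degree, as you note) and then $\Hom_R(\omega^{(p^e)},\omega)\cong\omega^{(1-p^e)}$ for reflexive divisorial modules. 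The paper instead stays entirely on the local-cohomology side: it replaces $\omega^{(p^e)}$ by $\omega\otimes_R\omega^{(p^e-1)}$ up to codimension $\ge2$, pulls $\omega^{(p^e-1)}$ out of $H^d_\frakm$ by right exactness, applies Hom--tensor adjunction, and uses $\End_R(E)\cong R$ (completeness). Your route is shorter and conceptually tidy since local duality does the bookkeeping in one stroke, while the paper's route is more elementary in that it only uses the special fact $\End_R(E)\cong R$ rather than the full dualizing-complex formalism; both deliver the same explicit map $c\mapsto cF^e$, which is the crucial input for the multiplicative statement in part~(2).

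One small point worth tightening: you assert $\Hom_R(\omega^{(p^e)},\omega)\cong\omega^{(1-p^e)}$ ``for divisorial ideals over a normal ring'' --- this is correct, but it is exactly the same reflexive-module bookkeeping (the class $[\omega]-p^e[\omega]$) that the paper hides in its isomorphism $H^d_\frakm(\omega^{(p^e)})\cong H^d_\frakm(\omega\otimes\omega^{(p^e-1)})$ and later in $\Hom_R(\omega^{(p^e-1)},R)\cong\omega^{(1-p^e)}$, so the two proofs are not shortcutting around different obstacles; they meet the same ones in different clothing. Your final caveat about carefully tracking the maps through the chain of natural isomorphisms is exactly right and is the real content needed to make part~(2) fall out; the paper handles it more tersely but with the same content.
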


\begin{proof}
By Remark~\ref{remark:adjoint}, we have
\[
\calF^e\big(H^d_\frakm(\omega)\big)\ \cong\ \Hom_R\big(R^{(e)}\otimes_R H^d_\frakm(\omega),\ H^d_\frakm(\omega)\big)\,.
\]
Moreover,
\[
R^{(e)}\otimes_R H^d_\frakm(\omega)\ \cong\ H^d_\frakm(\omega^{[p^e]})\ \cong\ H^d_\frakm(\omega^{(p^e)})\,,
\]
where the first isomorphism of by \cite[Exercise~9.7]{24hours}, and the second by Lemma~\ref{lemma:symb:frob}. By similar arguments
\begin{eqnarray*}
\Hom_R\big(H^d_\frakm(\omega^{(p^e)}),\ H^d_\frakm(\omega)\big) & \cong & \Hom_R\big(H^d_\frakm(\omega\otimes_R\omega^{(p^e-1)}),\ H^d_\frakm(\omega)\big) \\
& \cong & \Hom_R\big(\omega^{(p^e-1)}\otimes_R H^d_\frakm(\omega),\ H^d_\frakm(\omega)\big) \\
&\cong & \Hom_R\big(\omega^{(p^e-1)},\ \Hom_R(H^d_\frakm(\omega),\ H^d_\frakm(\omega)\big)\big)\,,
\end{eqnarray*}
with the last isomorphism using the adjointness of $\Hom$ and tensor. Since $R$ is complete, the module above is isomorphic to
\[
\Hom_R\big(\omega^{(p^e-1)},\ R\big)\ \cong\ \omega^{(1-p^e)}\,.
\]

Suppose $\phi\in\calF^e(M)$ and $\phi'\in\calF^{e'}(M)$ correspond respectively to $aF^e$ and $a'F^{e'}$, for elements $a\in\omega^{(1-p^e)}$ and $a'\in\omega^{(1-p^{e'})}$. Then $\phi\circ\phi'$ corresponds to $aF^e\circ bF^{e'}=ab^{p^e}F^{e+e'}$, which agrees with the ring structure of $\calT(\calR)$ since $a\cm b=ab^{p^e}$.
\end{proof}

\begin{remark}
\label{remark:fedder:ring}
Let $R$ be a normal complete local ring of prime characteristic $p$; let $A$ be a complete regular local ring with $R=A/I$. Using Remark~\ref{remark:blickle} and Theorem~\ref{theorem:main}, its is now a straightforward verification that $\calF(E)$ is isomorphic, as a graded ring, to
\[
\bigoplus_{e\ge 0}\frac{I^{[p^e]}:_AI}{I^{[p^e]}}\,,
\]
where the multiplication on this latter ring is the twisted multiplication $\cm$. An example of the isomorphism is worked out in Proposition~\ref{prop:fedder}.
\end{remark}

\section{$\QQ$-Gorenstein rings}
\label{sec:q:gorenstein}

We analyze the finite generation of $\calF(E)$ when $R$ is $\QQ$-Gorenstein. The following result follows from the corresponding statement for Cartier algebras, \cite[Remark~4.5]{Schwede}, but we include it here for the sake of completeness:

\begin{proposition}
\label{prop:q:gor}
Let $(R,\frakm)$ be a normal $\QQ$-Gorenstein local ring of prime characteristic. Let $\omega$ denote the canonical module of $R$. If the order of $\omega$ is relatively prime to the characteristic of $R$, then
$\calF(E)$ is a finitely generated ring extension of $\calF^0(E)$.
\end{proposition}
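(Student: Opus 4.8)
The plan is to invoke Theorem~\ref{theorem:main}(2) to replace $\calF(E)$ by the twisted algebra $\calT(\calR)$, where $\calR=\bigoplus_{n\ge0}\omega^{(-n)}$ is the anticanonical cover of $R$, and then to prove that $\calT(\calR)$ is a finitely generated ring extension of $\calT_0=\calR_0=R$. Under the isomorphism of Theorem~\ref{theorem:main} the degree zero components match, so $\calF^0(E)\cong\calT_0$, and the degree $e$ component of $\calT(\calR)$ is $\calR_{p^e-1}=\omega^{(1-p^e)}$, which is a finitely generated $R$-module since $R$ is Noetherian. Consequently it suffices to exhibit an integer $e_0$ with the property that the subring of $\calT(\calR)$ generated by $\calT_0\oplus\calT_1\oplus\cdots\oplus\calT_{e_0}$ is all of $\calT(\calR)$: the claimed finite generation follows at once, since that direct sum is a finitely generated $R$-module.

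Let $n$ denote the order of the class of $\omega$ in the divisor class group of $R$. The hypothesis is that $\gcd(n,p)=1$, so $p$ has a finite multiplicative order $k$ modulo $n$; equivalently $n\mid p^k-1$. The idea is that this $k$ will serve as $e_0$, with multiplication by a single generator of $\calT_k$ implementing a shift of degrees by $k$. To arrange this, fix a generator $f$ of the principal fractional ideal $\omega^{(-n)}$, and verify the identity
\[
f^m\,\omega^{(a)}\ =\ \omega^{(a-mn)}\qquad\text{for all }m\ge0\text{ and }a\in\ZZ\,,
\]
which is immediate from comparing orders at the height-one primes of the normal domain $R$. Setting $v=f^{(p^k-1)/n}$, the case $a=0$ gives $vR=\omega^{(1-p^k)}=\calT_k$, so $v$ is an $R$-module generator of $\calT_k$.

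The key step is then the equality $\calT_{e+k}=\calT_e\cm\calT_k$ for every $e\ge0$. Since $a\cm b=ab^{p^e}$ for $a$ of degree $e$, and $\calT_k=vR$, a short manipulation shows that the set of products $\calT_e\cm\calT_k$ is exactly the $R$-submodule $v^{p^e}\,\omega^{(1-p^e)}$ of the fraction field of $R$. Because $v^{p^e}=f^{p^e(p^k-1)/n}$, the displayed identity with $m=p^e(p^k-1)/n$ and $a=1-p^e$ yields
\[
v^{p^e}\,\omega^{(1-p^e)}\ =\ \omega^{\bigl(1-p^e-p^e(p^k-1)\bigr)}\ =\ \omega^{(1-p^{e+k})}\ =\ \calT_{e+k}\,.
\]
An induction on $e$ now shows that $\calT_e$ lies in the subring generated by $\calT_0\oplus\cdots\oplus\calT_k$ for every $e$: the cases $e\le k$ are trivial, and for $e>k$ one has $\calT_e=\calT_{e-k}\cm\calT_k$ with $\calT_{e-k}$ already in the subring. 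Thus $\calT(\calR)$, and hence $\calF(E)$, is generated over $R\cong\calF^0(E)$ by the finitely generated $R$-module $\calT_0\oplus\cdots\oplus\calT_k$.

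I expect the only delicate point to be distinguishing honest equalities of fractional ideals from isomorphisms of reflexive modules: the operation $\cm$ multiplies genuine submodules of the fraction field, so one should pin down the generator $f$ of $\omega^{(-n)}$ at the outset and justify $f^m\omega^{(a)}=\omega^{(a-mn)}$ by the codimension-one computation rather than by an abstract appeal to the class group. The hypothesis $\gcd(n,p)=1$ is used exactly once, to guarantee that the period $k$ exists; this is where the argument breaks when $p\mid n$, in line with Conjecture~\ref{conj:Q:gor}.
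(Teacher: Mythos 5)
Your argument takes the same route as the paper's proof: identify $\calF(E)$ with $\calT(\calR)$ via Theorem~\ref{theorem:main}, observe that $m\mid p^k-1$ makes $\calT_k=\omega^{(1-p^k)}$ a principal fractional ideal, and verify $\calT_e\cm\calT_k=\calT_{e+k}$ by an explicit computation with a generator (correctly placing the principal factor on the \emph{right} of $\cm$; the noncommutativity of $\cm$ means the other side would only yield the Frobenius power $\big(\omega^{(1-p^e)}\big)^{[p^k]}$, which need not equal $\omega^{(p^k(1-p^e))}$). The one step you omit is the reduction to the complete case: Theorem~\ref{theorem:main} is stated for a complete local ring, while the proposition allows $R$ merely local, so you should first pass to $\hat R$ --- noting that $\calF^0(E)\cong\hat R$ and that the order of $\hat\omega=\omega\otimes_R\hat R$ in the divisor class group of $\hat R$ divides $m$, hence remains coprime to $p$ --- exactly as the paper does in its opening line.
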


\begin{proof}
Since $\calF^0(E)$ is isomorphic to the $\frakm$-adic completion of $R$, the proposition reduces to the case where the ring $R$ is assumed to be complete.

Let $m$ be the order of $\omega$, and $p$ the characteristic of $R$. Then $p\mod m$ is an element of the group $(\ZZ/m\ZZ)^\times$, and hence there exists an integer $e_0$ with $p^{e_0}\equiv1\mod m$. We claim that $\calF(E)$ is generated over $\calF^0(E)$ by ${[\calF(E)]}_{\le e_0}$.

We use the identification $\calF(E)=\calT(\calR)$ from Theorem~\ref{theorem:main}. Since $\omega^{(m)}$ is a cyclic module, one has
\[
\omega^{(n+km)}\ =\ \omega^{(n)}\omega^{(km)}\qquad\text{ for all integers }k,n\,.
\]
Thus, for each $e>e_0$, one has
\begin{align*}
\calT_{e-e_0}\cm\calT_{e_0}
&\ =\ \omega^{(1-p^{e-e_0})}\cm\omega^{(1-p^{e_0})}\\
&\ =\ \omega^{(1-p^{e-e_0})}\cdot\big(\omega^{(1-p^{e_0})}\big)^{[p^{e-e_0}]}\\
&\ =\ \omega^{(1-p^{e-e_0})}\cdot\omega^{(p^{e-e_0}(1-p^{e_0}))}\\
&\ =\ \omega^{(1-p^{e-e_0}+p^{e-e_0}-p^e)}\\
&\ =\ \omega^{(1-p^e)}\\
&\ =\ \calT_e\,,
\end{align*}
which proves the claim.
\end{proof}

We conjecture that Proposition~\ref{prop:q:gor} has a converse in the following sense:

\begin{conjecture}
\label{conj:Q:gor}
Let $(R,\frakm)$ be a normal $\QQ$-Gorenstein ring of prime characteristic, such that the order of the canonical module in the divisor class group is a multiple of the characteristic of $R$. Then~$\calF(E)$ is not a finitely generated ring extension of $\calF^0(E)$.
\end{conjecture}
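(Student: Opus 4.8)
The approach I would take is to work entirely on the $\calF(E)$ side. By Theorem~\ref{theorem:main}, $\calF(E)\cong\calT(\calR)$ for $\calR$ the anticanonical cover, so $\calT_e=\omega^{(1-p^e)}$; since finite generation over $\calF^0(E)$ is unaffected by $\frakm$-adic completion, one may take $R$ complete and must show that $\calT(\calR)$ is not a finitely generated $R$-algebra, where $R=\calT_0$. Write $m$ for the order of $\omega$ in the divisor class group and $m=p^am'$ with $a\ge1$ and $p\nmid m'$. The key arithmetic point is that $r_e:=(p^e-1)\bmod m$ satisfies $r_e\equiv p-1\not\equiv0\pmod{p^a}$, so $r_e\ne0$ for every $e\ge1$: equivalently, the divisorial ideal $\omega^{(1-p^e)}$ is never a principal (trivial-class) ideal. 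This is exactly where $p\mid m$ is used, and it is precisely the step that makes the proof of Proposition~\ref{prop:q:gor} break down—that argument relied on $\omega^{(1-p^{e_0})}$ being principal for a suitable $e_0$, hence on its Frobenius power coinciding with its reflexive power.

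Next I would unwind the twisted multiplication. Fix a generator $u$ of $\omega^{(-m)}$, so $\omega^{(-n-m)}=u\,\omega^{(-n)}$, and write $p^e-1=q_em+r_e$. A direct computation with $\cm$ gives, for $0<i<e$,
\[
\calT_e=u^{q_e}\omega^{(-r_e)}\,,\qquad \calT_i\cm\calT_{e-i}=u^{q_e}\,M_{i,e}\,,
\]
where
\[
M_{i,e}\ :=\ u^{c_{i,e}}\,\omega^{(-r_i)}\big(\omega^{(-r_{e-i})}\big)^{[p^i]}\ \subseteq\ \omega^{(-r_e)}\,,\qquad c_{i,e}=(r_e-r_i-p^ir_{e-i})/m\le0\,.
\]
Since $\calT(\calR)$ is $\NN$-graded with Noetherian components, it is a finitely generated $R$-algebra if and only if $\calT_e=\sum_{i=1}^N\calT_i\cm\calT_{e-i}$ for some fixed $N$ and all $e>N$; using that $M_{i,e}$ depends, for fixed $i$ and large $e$, only on $e$ modulo the eventual period of the sequence $r_\bullet$, a standard Noetherian argument reduces the conjecture to exhibiting one residue class for which
\[
\sum_{i\ge1}M_{i,e}\ \subsetneq\ \omega^{(-r_e)}\qquad\text{for all }e\gg0\,.
\]

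The mechanism is that $\big(\omega^{(-r_{e-i})}\big)^{[p^i]}$ is \emph{strictly} contained in its reflexive power—because $\omega^{(-r_{e-i})}$ is not principal—and this defect should survive all the $u$-twisting and the summation over $i$. I would make this quantitative via a divisorial valuation $v$ on $R$: the aim is $v(M_{i,e})\ge v(\omega^{(-r_e)})+\delta$ for a fixed $\delta>0$ and all $i$, all large $e$, which immediately forces $\sum_iM_{i,e}\subsetneq\omega^{(-r_e)}$. The point is that $v(M_{i,e})$ has two large contributions—one from the inverse power $u^{c_{i,e}}$ and one from the ``content'' of the Frobenius power—which, because $p\mid m$, fail to cancel $v(\omega^{(-r_e)})$; when $p\nmid m$ they interlock exactly, which is what underlies Proposition~\ref{prop:q:gor}. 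This is the structural analogue of the monomial-degree count in Example~\ref{example:cm}.\ref{example:dim3}, and it runs cleanly in the graded case. For instance, for odd $p$ and $R=\FF[x,y]^{(p)}$—the cone over a rational normal curve, which is $\QQ$-Gorenstein of index $p$ and even $F$-regular—one has $m=p$, $r_e=p-1$, $\calT_e=u^{p^{e-1}-1}\omega^{(1-p)}$ for $e\ge1$, and $M_{i,e}=\omega^{(1-p)}\cdot K_i$ with $K_i:=u^{-(p^i-p^{i-1})}\big(\omega^{(1-p)}\big)^{[p^i]}\subseteq R$; a degree count in the natural grading shows that $K_i$ is concentrated in degrees $\ge(p-2)p^{i-1}$, so $K_i\subseteq\frakm$, whence $\sum_iM_{i,e}=\omega^{(1-p)}\sum_iK_i\subseteq\frakm\,\omega^{(1-p)}\subsetneq\omega^{(1-p)}$ and $\calF(E)$ is not finitely generated. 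The same computation should dispatch all normal graded $\QQ$-Gorenstein cones with index divisible by $p$; a complementary route to the general local case is to reduce it to the single example of Section~\ref{sec:q:gorenstein} via the structure theory of cyclic covers and localization at a height-one prime carrying the $p$-torsion part of the canonical class.

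The real obstacle is the valuation estimate for a general (non-graded) complete normal $\QQ$-Gorenstein local ring: the reductions above are a routine, if fiddly, unwinding of Theorem~\ref{theorem:main}, but proving properness of $\sum_iM_{i,e}$ genuinely uses the hypothesis $p\mid m$ and seems to require a new asymptotic/positivity input—one must produce a divisorial valuation that simultaneously detects $v(u)>0$ and the non-principality of the ideals $\omega^{(-r)}$, uniformly in $i$. It is a priori conceivable that whether $\sum_iM_{i,e}$ exhausts $\omega^{(-r_e)}$ depends on more of $R$ than the index alone, so part of the task is to confirm that the index suffices; for that reason I would settle the graded/cone case first, where the Hilbert-function bookkeeping makes the positivity transparent, before attempting to transport the estimate to the general local statement.
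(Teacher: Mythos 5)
This statement is Conjecture~\ref{conj:Q:gor}; the paper does \emph{not} prove it, and only verifies it in a single example (the $n=p=3$ Veronese in Section~\ref{sec:q:gorenstein}, via the explicit computation of $\omega^{(1-q)}\cm\omega^{(1-q')}=(x^q,y^q)\,\omega^{(1-qq')}$). Your reduction via Theorem~\ref{theorem:main} to showing $\sum_{i=1}^{e-1}M_{i,e}\subsetneq\omega^{(-r_e)}$ for infinitely many $e$ is correct, and your worked case $R=\FF[x,y]^{(p)}$ ($p$ odd) is a sound variant of the paper's example (with a minor slip: for $m=p^am'$ and $e\ge a$ one has $r_e\equiv -1\equiv p^a-1\pmod{p^a}$, not $p-1$; the conclusion $r_e\ne 0$ stands).

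However, the proposal does not prove the conjecture, and you are candid about where it stops. The load-bearing step is the uniform estimate $v(M_{i,e})\ge v(\omega^{(-r_e)})+\delta$ (or, in the graded case, the inclusion $M_{i,e}\subseteq\frakm\,\omega^{(-r_e)}$) for all $i$ and all large $e$, and you establish this only for one family of Veronese cones, not for a general graded cone and certainly not for an arbitrary complete normal $\QQ$-Gorenstein local ring with $p\mid m$. Nothing you write rules out the a priori possibility that for some such $R$ the modules $M_{i,e}$, as $i$ varies, jointly fill $\omega^{(-r_e)}$ even though each is proper; the hypothesis ``$p\mid m$'' guarantees non-principality of each $\omega^{(-r_e)}$, but by itself it does not obviously control the content of the Frobenius powers $(\omega^{(-r_{e-i})})^{[p^i]}$ inside their reflexive hulls uniformly in $i$. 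Your sketched ``complementary route'' (cyclic covers plus localization at a height-one prime carrying the $p$-torsion of the class group) is too vague to assess: such a prime need not exist, the class group need not have a clean $p$-primary decomposition realized by height-one primes, and symbolic/anticanonical Rees algebras do not localize in a way that would transport the non-finite-generation conclusion. So what you have is a plausible strategy plus a confirming example essentially identical to the paper's, with the decisive positivity input still missing; the statement remains, as the paper presents it, a conjecture.
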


\subsection*{Veronese subrings}

Let $\FF$ be a field of characteristic $p>0$, and $A=\FF[x_1,\dots,x_d]$ a polynomial ring. Given a positive integer $n$, we denote the $n$-th Veronese subring of $A$ by
\[
A_{(n)}\ =\ \bigoplus_{k\ge0}A_{nk}\,;
\]
this differs from the standard notation, e.g., \cite{GW}, since we reserve superscripts $(\phantom{m})^{(n)}$ for symbolic powers. The cyclic module $x_1\cdots x_dA$ is the graded canonical module for the polynomial ring~$A$. By \cite[Corollary~3.1.3]{GW}, the Veronese submodule
\[
\big(x_1\cdots x_dA\big)_{(n)}\ =\ \bigoplus_{k\ge0}\big[x_1\cdots x_dA\big]_{nk}
\]
is the graded canonical module for subring~$A_{(n)}$. Let $\frakm$ denote the homogeneous maximal ideal of $A_{(n)}$. The injective hull of $A_{(n)}/\frakm$ in the category of graded $A_{(n)}$-modules is
\begin{align*}
H^d_\frakm\Big(\big(x_1\cdots x_dA\big)_{(n)}\Big)\
& =\ \Big[H^d_\frakm(\big(x_1\cdots x_dA\big)\Big]_{(n)}\\
& =\ \left[\frac{A_{x_1\cdots x_d}}{\sum_i x_1\cdots x_dA_{x_1\cdots \hat{x}_i\cdots x_d}}\right]_{(n)}\,,
\end{align*}
see \cite[Theorem~3.1.1]{GW}. By \cite[Theorem~1.2.5]{GW}, this is also the injective hull in the category of all $A_{(n)}$-modules.

Let $R$ be the $\frakm$-adic completion of $A_{(n)}$. As it is $\frakm$-torsion, the module displayed above is also an $R$-module; it is the injective hull of $R/\frakm R$ in the category of $R$-modules.

\begin{proposition}
\label{prop:veronese}
Let $\FF$ be a field of characteristic $p>0$, and let $A=\FF[x_1,\dots,x_d]$ be a polynomial ring of dimension $d$. Let $n$ be a positive integer, and $R$ be the completion of the $n$-th Veronese subring of $A$ at its homogeneous maximal ideal. Set $E=M/N$ where
\[
M\ =\ R_{x_1^n\cdots x_d^n}
\]
and $N$ is the $R$-submodule spanned by elements $x_1^{i_1}\cdots x_d^{i_d}\in M$ with $i_k\ge 1$ for some~$k$; the module $E$ is the injective hull of the residue field of $R$.

Then $\calF^e(E)$ is the left $R$-module generated by the elements
\[
\frac{1}{x_1^{\alpha_1}\cdots x_d^{\alpha_d}}F^e\,,
\]
where $F$ is the $p$-th power map, $\alpha_k\le p^e-1$ for each $k$, and $\sum\alpha_k\equiv0\mod n$.
\end{proposition}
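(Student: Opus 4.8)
The plan is to run Theorem~\ref{theorem:main} explicitly. By part~(1) of that theorem, with $q=p^e$ and $\omega$ the canonical module of $R$, the group $\calF^e(E)$ is identified with $\omega^{(1-q)}F^e$, where $F^e\colon H^d_\frakm(\omega)\to H^d_\frakm(\omega^{(q)})$ is induced by the map $\omega\to\omega^{[q]}$, $c\mapsto c^q$, and an element $a\in\omega^{(1-q)}\cong\Hom_R(\omega^{(q)},\omega)$ acts by the induced map $H^d_\frakm(\omega^{(q)})\to H^d_\frakm(\omega)$. The task is to turn each of these into a monomial statement, and in particular to describe $\omega^{(1-q)}$ concretely.

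For the module: as recorded just before the statement, $\omega=(x_1\cdots x_dA)_{(n)}$ and $H^d_\frakm(\omega)=\big[H^d_{\frakm_A}(x_1\cdots x_dA)\big]_{(n)}$ has $\FF$-basis the Laurent monomials $x^\beta$ with $\beta_k\le 0$ for all $k$ and $\sum\beta_k\equiv0\bmod n$ — which is precisely the description of $E=M/N$, the class of $x^\beta$ vanishing in $E$ exactly when some $\beta_k\ge1$. Next I would identify the Frobenius of Theorem~\ref{theorem:main}, in these coordinates, with the $p$-th power map $x^\beta\mapsto x^{q\beta}$: this drops out of localizing the $R$-semilinear map $c\mapsto c^q$ on $x_1\cdots x_dA\to(x_1\cdots x_dA)^{[q]}$ at a system of parameters of $R$, just as one does for the Frobenius on $H^d_\frakm$ of a regular ring, together with the fact that the comparison isomorphism of Lemma~\ref{lemma:symb:frob} is the identity in monomial terms (the modules $\omega^{[q]}$ and $\omega^{(q)}$ already agree after inverting $x_1\cdots x_d$, since they differ only at $\frakm$ — here $d\ge2$ enters, via the dimension estimate in that lemma). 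Likewise, for a Laurent monomial $a=x^{-\alpha}$ in $\omega^{(1-q)}$, the induced map on local cohomology is plain multiplication by $x^{-\alpha}$. Hence $aF^e$ is the operator $x^\beta\mapsto x^{q\beta-\alpha}$, that is, $\tfrac1{x^\alpha}F^e$, and everything reduces to exhibiting $\omega^{(1-q)}$ as a span of Laurent monomials.

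For that I would use that $R$ is the completion of a normal affine semigroup ring with divisor class group $\ZZ/n\ZZ$, represented by the divisorial $R$-submodules $M_j=\bigoplus_{m\ge0,\ m\equiv j\bmod n}A_m$ of $\FF(x_1,\dots,x_d)$ (for $0\le j<n$, indices read modulo $n$), which satisfy $(M_j\cdot M_{j'})^{**}=M_{(j+j')\bmod n}$ and $M_j^\vee=M_{(-j)\bmod n}$. Since $\omega=(x_1\cdots x_d)\cdot M_{(-d)\bmod n}$, taking reflexive powers gives $\omega^{(1-q)}=(x_1\cdots x_d)^{-(q-1)}\cdot M_{(q-1)d\bmod n}$; writing a Laurent monomial of this as $(x_1\cdots x_d)^{-(q-1)}x^\gamma$ with $\gamma\ge0$ and $\sum\gamma_k\equiv(q-1)d\bmod n$, and putting $\delta=\gamma-(q-1)(1,\dots,1)$, the two conditions turn into exactly $\delta_k\ge1-q$ for all $k$ and $\sum\delta_k\equiv0\bmod n$ — the factors of $d$ cancelling. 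Therefore $\omega^{(1-q)}$ is the $\FF$-span of the monomials $x^{-\alpha}$ with $\alpha_k\le q-1$ for every $k$ and $\sum\alpha_k\equiv0\bmod n$, and, by the previous paragraph, $\calF^e(E)=\omega^{(1-q)}F^e$ is the left $R$-module generated by the associated operators $\tfrac1{x^\alpha}F^e$, as asserted.

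The heart of the argument — and the step I expect to be fiddliest — is the monomial bookkeeping just carried out: the twist by $x_1\cdots x_d$ relating $\omega$ to $M_{(-d)\bmod n}$, the reflexive-power formula for the $M_j$ (where normality, and the regularity-free situation $d\ge2$ that makes the $M_j$ genuinely detect the class group, are needed), the matching of the abstract $F^e$ and of the $R$-action with the naive operations on monomials, and the cancellation of the $d$'s in the final congruence. One could instead avoid Theorem~\ref{theorem:main} altogether: $E$ and each Frobenius operator carry a $\ZZ^d$-multigrading, an arbitrary $\phi\in\calF^e(E)$ decomposes into its (locally finite family of) multihomogeneous components, each such component is a scalar multiple of some $x^\beta\mapsto x^{q\beta+\gamma}$, and one checks directly that such a map is a well-defined element of $\calF^e(E)$ exactly when $\gamma_k\ge1-q$ for all $k$ and $\sum\gamma_k\equiv0\bmod n$; in this approach the subtle point is the necessity of $\gamma_k\ge1-q$, which is again where $d\ge2$ intervenes.
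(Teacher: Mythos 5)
Your proposal is correct and takes essentially the same route as the paper: invoke Theorem~\ref{theorem:main} to identify $\calF^e(E)$ with $\omega^{(1-p^e)}F^e$, and then describe $\omega^{(1-p^e)}$ monomially as the span of the $x^{-\alpha}$ with $\alpha_k\le p^e-1$ and $\sum\alpha_k\equiv 0\bmod n$. The paper arrives at this by citing the Goto--Watanabe description of the Veronese canonical module and simply asserting the identity $\omega^{(1-p^e)}=\big[(x_1\cdots x_d)^{1-p^e}A\big]_{(n)}$, whereas you derive the same formula via the divisor-class-group modules $M_j$ and also spell out the matching of the abstract $F^e$ with the literal $p^e$-power map -- details the paper leaves implicit, but the strategy and the final bookkeeping coincide.
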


\begin{remark}
We use $F$ for the Frobenius endomorphism of the ring $M$. The condition $\sum\alpha_k\equiv0\mod n$, or equivalently $x_1^{\alpha_1}\cdots x_d^{\alpha_d}\in M$, implies that
\[
\frac{1}{x_1^{\alpha_1}\cdots x_d^{\alpha_d}}F^e\ \in\ \calF^e(M)\,.
\]
When $\alpha_k\le p^e-1$ for each $k$, the map displayed above stabilizes $N$ and thus induces an element of $\calF^e(M/N)$; we reuse $F$ for the $p$-th power map on $M/N$.
\end{remark}

\begin{proof}[Proof of Proposition~\ref{prop:veronese}]
In view of the above remark, it remains to establish that the given elements are indeed generators for $\calF^e(E)$. The canonical module of $R$ is
\[
\omega_R\ =\ \big(x_1\cdots x_dA\big)_{(n)}R
\]
and, indeed, $H^d_\frakm(\omega_R)=E$. Thus, Theorem~\ref{theorem:main} implies that
\[
\calF^e(E)\ =\ \omega_R^{(1-q)}F^e\,,
\]
where $q=p^e$. But $\omega_R^{(1-q)}$ is the completion of the $A_{(n)}$-module
\[
\left[\frac{1}{x_1^{q-1}\cdots x_d^{q-1}}A\right]_{(n)}=
\left(\frac{1}{x_1^{\alpha_1}\cdots x_d^{\alpha_d}}\ \mid\ \alpha_k\le q-1\text{ for each $k$, }\sum\alpha_k\equiv0\mod n\right)A_{(n)}\,,
\]
which completes the proof.
\end{proof}

\begin{example}
Consider $d=2$ and $n=3$ in Proposition~\ref{prop:veronese}, i.e.,
\[
R\ =\ \FF[[x^3,\ x^2y,\ xy^2,\ y^3]].
\]
Then $\omega=(x^2y,xy^2)R$ has order $3$ in the divisor class group of $R$; indeed,
\[
\omega^{(2)}=(x^4y^2,\ x^3y^3,\ x^2y^4)R\qquad\text{ and }\qquad\omega^{(3)}=(x^3y^3)R\,.
\]

\begin{enumerate}[\ \rm(1)]
\item If $p\equiv1\mod3$, then $\omega^{(1-q)}=(xy)^{1-q}R$ is cyclic for each $q=p^e$, and
\[
\calF^e(E)\ =\ \frac{1}{(xy)^{q-1}}F^e\,.
\]
Since
\[
\frac{1}{(xy)^{p-1}}F\ \circ\ \frac{1}{(xy)^{q-1}}F^e\ =\ \frac{1}{(xy)^{pq-1}}F^{e+1}\,,
\]
it follows that
\[
\calF(E)\ =\ R\left\{\frac{1}{(xy)^{p-1}}F\right\}\,.
\]

\item If $p\equiv2\mod3$ and $q=p^e$, then $\omega^{(1-q)}=(xy)^{1-q}R$ for $e$ even, and
\[
\omega^{(1-q)}\ =\ \left(\frac{1}{x^{q-3}y^{q-1}},\ \frac{1}{x^{q-2}y^{q-2}},\ \frac{1}{x^{q-1}y^{q-3}}\right)R
\]
for $e$ odd. The proof of Proposition~\ref{prop:q:gor} shows that $\calF(E)$ is generated by its elements of degree $\le2$, and hence
\[
\calF(E)\ =\ R\left\{\frac{1}{x^{p-3}y^{p-1}}F,\ \frac{1}{x^{p-2}y^{p-2}}F,\ \frac{1}{x^{p-1}y^{p-3}}F,\ \frac{1}{x^{p^2-1}y^{p^2-1}}F^2\right\}.
\]
In the case $p=2$, the above reads
\[
\calF(E)\ =\ R\left\{\frac{x}{y}F,\ F,\ \frac{y}{x}F,\ \frac{1}{x^3y^3}F^2\right\}.
\]

\item When $p=3$, one has
\[
\omega^{(1-q)}\ =\ \frac{1}{x^qy^q}\big(x^2y,\ xy^2\big)R\ =\ \left(\frac{1}{x^{q-2}y^{q-1}},\ \frac{1}{x^{q-1}y^{q-2}}\right)R
\]
for each $q=p^e$. In this case,
\[
\calF(E)\ =\ R\left\{\frac{1}{xy^2}F,\ \frac{1}{x^2y}F,\ \frac{1}{x^7y^8}F^2,\ \frac{1}{x^8y^7}F^2,\ \frac{1}{x^{25}y^{26}}F^3,\ \frac{1}{x^{26}y^{25}}F^3,\ \dots\right\},
\]
and $\calF(E)$ is not a finitely generated extension ring of $\calF^0(E)=R$; indeed,
\begin{align*}
\omega^{(1-q)}\cm\omega^{(1-q')}
&\ =\ \frac{1}{x^qy^q}\big(x^2y,\ xy^2\big)R\cm\frac{1}{x^{q'}y^{q'}}\big(x^2y,\ xy^2\big)R\\
&\ =\ \frac{1}{x^{qq'+q}y^{qq'+q}}\big(x^2y,\ xy^2\big)\cdot\big(x^{2q}y^q,\ x^qy^{2q}\big)R\\
&\ =\ \frac{1}{x^{qq'}y^{qq'}}\big(x^{q+2}y,\ x^{q+1}y^2,\ x^2y^{q+1},\ xy^{q+2}\big)R\\
&\ =\ \frac{1}{x^{qq'}y^{qq'}} (x^2y,\ xy^2)\cdot (x^q,\ y^q)R\\
&\ =\ (x^q,\ y^q)\ \omega^{(1-qq')}
\end{align*}
for $q=p^e$ and $q'=p^{e'}$, where $e$ and $e'$ are positive integers.
\end{enumerate}
\end{example}

\section{A determinantal ring}
\label{sec:determinantal}

Let $R$ be the determinantal ring $\FF[X]/I$, where $X$ is a $2\times 3$ matrix of variables over a field of characteristic $p>0$, and $I$ is the ideal generated by the size $2$ minors of~$X$. Set $\frakm$ to be the homogeneous maximal ideal of $R$. We show that the algebra of Frobenius operators $\calF(E)$ is not finitely generated over $\calF^0(E)=\hat{R}$; this proves Conjecture~3.1 of \cite{Katzman}. We also extend Fedder's calculation of the ideals $I^{[p]}:I$ to the ideals $I^{[q]}:I$ for all $q=p^e$.

The ring $R$ is isomorphic to the affine semigroup ring
\[
\FF\left[\begin{matrix}sx,&sy,&sz,\\tx,&ty,&tz\end{matrix}\right]
\ \subseteq\ \FF[s,t,x,y,z]\,.
\]
Using this identification, $R$ is the Segre product $A\#B$ of the polynomial rings $A=\FF[s,t]$ and $B=\FF[x,y,z]$. By \cite[Theorem~4.3.1]{GW}, the canonical module of~$R$ is the Segre product of the graded canonical modules $stA$ and $xyzB$ of the respective polynomial rings, i.e.,
\[
\omega_R\ =\ stA\ \#\ xyzB\ =\ (s^2txyz,\ st^2xyz)R\,.
\]
Let $e$ be a nonnegative integer, and $q=p^e$. Then
\[
\omega_R^{(1-q)}\ =\ \frac{1}{(st)^{q-1}}A\ \#\ \frac{1}{(xyz)^{q-1}}B
\]
is the $R$ module spanned by the elements
\[
\frac{1}{(st)^{q-1}x^ky^lz^m}
\]
with $k+l+m=2q-2$ and $k,l,m\le q-1$.

View $E$ as $M/N$ where $M=R_{s^2txyz}$, and $N$ is the $R$-submodule spanned by the elements $s^it^jx^ky^lz^m$ in $M$ that have at least one positive exponent. Then $\calF^e(E)$ is the left $\hat{R}$-module generated by
\[
\frac{1}{(st)^{q-1}x^ky^lz^m}F^e\,,
\]
where $F$ is the $p$-th power map, $k+l+m=2q-2$, and $k,l,m\le q-1$. Using this description, it is an elementary---though somewhat tedious---verification that $\calF(E)$ is not finitely generated over $\calF^0(E)$; alternatively, note that the symbolic powers of the height one prime ideals $(sx,sy,sz)\hat{R}$ and $(sx,tx)\hat{R}$ agree with the ordinary powers by \cite[Corollary~7.10]{BV}. Thus, the anticanonical cover of $\hat{R}$ is the ring $\calR$ with
\[
\calR_n\ =\ \frac{1}{(s^2txyz)^n}(sx,sy,sz)^n\hat{R}\,,
\]
and so
\[
\calT_e\ =\ \frac{1}{(s^2txyz)^{q-1}}(sx,sy,sz)^{q-1}\hat{R}\,.
\]
Thus,
\begin{align*}
\calT_{e_1}\cm\calT_{e_2}&\ =\ \frac{1}{(s^2txyz)^{q_1-1}}(sx,sy,sz)^{q_1-1}\cm
\frac{1}{(s^2txyz)^{q_2-1}}(sx,sy,sz)^{q_2-1}\\
&\ =\ \frac{1}{(s^2txyz)^{q_1q_2-1}}(sx,sy,sz)^{q_1-1}\cdot\left((sx,sy,sz)^{q_2-1}\right)^{[q_1]}\\
&\ =\ \frac{1}{(s^2txyz)^{q_1q_2-1}}(sx,sy,sz)^{q_1-1}\cdot\Big((sx)^{q_1}, (sy)^{q_1}, (sz)^{q_1}\Big)^{q_2-1}
\end{align*}
where $q_i=p^{e_i}$. We claim that
\[
\calT_e\ \neq\ \sum_{e_1=1}^{e-1}\calT_{e_1}\cm\calT_{e-e_1}\,.
\]
For this, it suffices to show that
\[
\frac{1}{(s^2txyz)^{q-1}}sx(sy)^{q/p-1}(sz)^{q-q/p-1}
\]
does not belong to $\calT_{e_1}\cm\calT_{e_2}$ for integers $e_i<e$ with $e_1+e_2=e$. By the description of $\calT_{e_1}\cm\calT_{e_2}$ above, this is tantamount to proving that
\[
sx(sy)^{q/p-1}(sz)^{q-q/p-1}\ \notin\ (sx,sy,sz)^{q_1-1}\cdot\Big((sx)^{q_1}, (sy)^{q_1}, (sz)^{q_1}\Big)^{q_2-1}\,,
\]
but this is essentially Example~\ref{example:cm}.\ref{example:dim3}.

\subsection*{Fedder's computation}

Let $A$ be the power series ring $\FF[[u,v,w,x,y,z]]$ for $\FF$ a field of characteristic $p>0$, and let $I$ be the ideal generated by the size $2$ minors of the matrix
\[
\begin{pmatrix}
u&v&w\\
x&y&z
\end{pmatrix}\,,
\]
In \cite[Proposition~4.7]{Fedder}, Fedder shows that
\[
I^{[p]}:I\ =\ I^{2p-2}+I^{[p]}\,.
\]
We extend this next by calculating the ideals $I^{[q]}:I$ for each prime power $q=p^e$.

\begin{proposition}
\label{prop:fedder}
Let $A$ be the power series ring $\FF[[u,v,w,x,y,z]]$ where $K$ a field of characteristic $p>0$. Let $I$ be the ideal of $A$ generated by $\Delta_1=vz-wy$, $\Delta_2=wx-uz$, and $\Delta_3=uy-vx$.
\begin{enumerate}[\rm(1)]
\item For $q=p^e$ and nonnegative integers $s,t$ with $s+t\le q-1$, one has
\[
y^sz^t(\Delta_2\Delta_3)^{q-1}\ \in\ I^{[q]}+x^{s+t}A\,.
\]

\item For $q,s,t$ as above, let $f_{s,t}$ be an element of $A$ with
\[
y^sz^t(\Delta_2\Delta_3)^{q-1}\ \equiv\ x^{s+t}f_{s,t}\mod I^{[q]}\,.
\]
Then $f_{s,t}$ is well-defined modulo $I^{[q]}$. Moreover, $f_{s,t}\in I^{[q]}:_AI$, and
\[
I^{[q]}:_AI\ =\ I^{[q]}+\big(f_{s,t}\mid s+t\le q-1\big)A\,.
\]
\end{enumerate}
\end{proposition}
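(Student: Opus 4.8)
The plan is to treat the three parts separately; the membership (1) and the containment $\supseteq$ in (2) are quick, and the real content is the containment $\subseteq$ in (2).

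\smallskip
\noindent\emph{Part (1).} I would expand and reduce. Writing $\Delta_2^{q-1}=(wx-uz)^{q-1}$ and $\Delta_3^{q-1}=(uy-vx)^{q-1}$ and multiplying out,
\[
y^sz^t(\Delta_2\Delta_3)^{q-1}\ =\ \sum_{0\le i,j\le q-1}(-1)^{q-1-i+j}\binom{q-1}{i}\binom{q-1}{j}\,u^{2q-2-i-j}\,v^jw^i\,x^{i+j}\,y^{q-1-j+s}\,z^{q-1-i+t}\,.
\]
A term with $i+j\ge s+t$ already lies in $x^{s+t}A$. For a term with $i+j<s+t$: since $s+t\le q-1$ we get $i+j\le q-2$, so the exponent $2q-2-i-j$ of $u$ is $\ge q$; and $(s-j)+(t-i)=(s+t)-(i+j)\ge1$, so $s-j\ge1$ (forcing the exponent of $y$ to be $\ge q$) or $t-i\ge1$ (forcing the exponent of $z$ to be $\ge q$). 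In the first case the monomial is divisible by $u^qy^q$; the freshman's dream applied to $\Delta_3=uy-vx$ gives $u^qy^q-v^qx^q=\Delta_3^q\in I^{[q]}$, so replacing $u^qy^q$ by $v^qx^q$ moves the term into $I^{[q]}$ at the cost of a factor $x^q\in x^{s+t}A$. The second case is symmetric, using $u^qz^q-w^qx^q=-\Delta_2^q\in I^{[q]}$. Summing over all terms gives $y^sz^t(\Delta_2\Delta_3)^{q-1}\in I^{[q]}+x^{s+t}A$.

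\smallskip
\noindent\emph{Part (2), well-definedness and $\supseteq$.} For well-definedness it suffices that $x^{s+t}$ be a nonzerodivisor modulo $I^{[q]}$. Because $A$ is regular, Frobenius is flat, so the Hilbert--Burch resolution of $A/I$ base-changes to a length-two free resolution of $A/I^{[q]}$; Auslander--Buchsbaum then shows $A/I^{[q]}$ is Cohen--Macaulay of dimension $4$, hence unmixed with unique associated prime $I$, so any element outside $I$---in particular $x$---is a nonzerodivisor modulo $I^{[q]}$. For $f_{s,t}\in I^{[q]}:_AI$: multiplying $x^{s+t}f_{s,t}\equiv y^sz^t(\Delta_2\Delta_3)^{q-1}\pmod{I^{[q]}}$ by $\Delta_i$ and cancelling $x^{s+t}$, it is enough to see $\Delta_i\,y^sz^t(\Delta_2\Delta_3)^{q-1}\in I^{[q]}$. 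For $i=2,3$ this is clear since $\Delta_2^q,\Delta_3^q\in I^{[q]}$. For $i=1$, multiply by $x$ and use the Hilbert--Burch syzygy $x\Delta_1=-y\Delta_2-z\Delta_3$:
\[
x\,\Delta_1\,y^sz^t(\Delta_2\Delta_3)^{q-1}\ =\ -y^{s+1}z^t\,\Delta_2^q\Delta_3^{q-1}-y^sz^{t+1}\,\Delta_2^{q-1}\Delta_3^q\ \in\ I^{[q]}\,,
\]
and cancel $x$. Thus $I^{[q]}+(f_{s,t}\mid s+t\le q-1)A\subseteq I^{[q]}:_AI$.

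\smallskip
\noindent\emph{Part (2), the inclusion $\subseteq$.} Passing to the homogeneous polynomial ring (everything is graded, and forming $I^{[q]}:I$ commutes with completion), I would combine Theorem~\ref{theorem:main}, Remark~\ref{remark:fedder:ring}, and the description of $\calF^q(E)$ obtained at the start of this section: $(I^{[q]}:I)/I^{[q]}\cong\omega_R^{(1-q)}$ is minimally generated, as an $R$-module, by exactly $\binom{q+1}{2}$ elements, all concentrated in a single internal degree, which a degree count identifies as $4(q-1)$---precisely the common degree of all the $f_{s,t}$. Since there are exactly $\binom{q+1}{2}$ of the $f_{s,t}$, it then suffices to show they are $\FF$-linearly independent modulo $I^{[q]}$. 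Given $\sum c_{s,t}f_{s,t}\in I^{[q]}$, multiply by $x^{q-1}$; using $x^{q-1}f_{s,t}\equiv x^{q-1-s-t}y^sz^t(\Delta_2\Delta_3)^{q-1}\pmod{I^{[q]}}$, this reads $(\Delta_2\Delta_3)^{q-1}h\in I^{[q]}$ with $h=\sum c_{s,t}x^{q-1-s-t}y^sz^t\in\FF[x,y,z]_{q-1}$. Now $I^{[q]}\subseteq(u,v,w)^{[q]}=(u^q,v^q,w^q)$, while the coefficient of $v^{q-1}w^{q-1}$ in $(\Delta_2\Delta_3)^{q-1}=(wx-uz)^{q-1}(uy-vx)^{q-1}$, viewed as a polynomial in $u,v,w$ over $\FF[x,y,z]$, is $(-1)^{q-1}x^{2q-2}$; comparing coefficients of the monomial $v^{q-1}w^{q-1}$---which is divisible by none of $u^q,v^q,w^q$---forces $x^{2q-2}h=0$, so $h=0$ and all $c_{s,t}=0$.

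\smallskip
\noindent\emph{Main obstacle.} Parts (1) and the first half of (2) are short once the two Hilbert--Burch syzygies and the freshman's-dream identities $u^qy^q-v^qx^q=\Delta_3^q$, $u^qz^q-w^qx^q=-\Delta_2^q$ are in hand. The substantive step is the counting input for $\subseteq$: pinning down, via the Blickle-type isomorphism $\calF^q(E)\cong(I^{[q]}:I)/I^{[q]}$ and Theorem~\ref{theorem:main}, that $(I^{[q]}:I)/I^{[q]}$ is minimally generated by $\binom{q+1}{2}$ elements all in internal degree $4(q-1)$. This is exactly the explicit matching promised in Remark~\ref{remark:fedder:ring}---carrying each generator $\tfrac{1}{(st)^{q-1}x^ky^lz^m}F^q$ of $\calF^q(E)$ over to the appropriate $f_{s,t}$---and one could alternatively make that correspondence fully explicit, dispensing with the degree count at the price of more laborious bookkeeping.
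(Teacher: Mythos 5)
Your Part (1) is correct and, while the paper instead packages the argument as an intersection of monomial ideals, the two are essentially the same observation: every monomial in the expansion of $y^sz^t(\Delta_2\Delta_3)^{q-1}$ is either already in $x^{s+t}A$ or is divisible by $u^qy^q$ or $u^qz^q$, which reduce modulo $\Delta_3^q$ or $\Delta_2^q$ to something in $x^qA\subseteq x^{s+t}A$. Your first half of Part (2) is also correct and takes a genuinely different route: the paper proves $I^{2q-1}\subseteq I^{[q]}$ by localizing at $I$ and pigeonholing, whereas you use the Hilbert--Burch syzygy $x\Delta_1+y\Delta_2+z\Delta_3=0$ directly; both are fine, and yours is perhaps slightly more elementary. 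The unmixedness-via-Hilbert--Burch argument for $x$ being a nonzerodivisor modulo $I^{[q]}$ is also a valid substitute for the paper's appeal to the fact that $I$ and $I^{[q]}$ share associated primes.

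The gap is in your $\subseteq$ argument. You assert that $(I^{[q]}:_AI)/I^{[q]}$ is minimally generated by $\binom{q+1}{2}$ elements ``all concentrated in a single internal degree, which a degree count identifies as $4(q-1)$.'' The first claim follows from $\omega_R^{(1-q)}\cong(I^{[q]}:_AI)/I^{[q]}$ and the explicit generators of $\omega_R^{(1-q)}$ found earlier in the section, since they all sit in one degree. But the second claim---that this common degree equals $4(q-1)$---requires pinning down the graded shift in the composite isomorphism of Theorem~\ref{theorem:main} and Remark~\ref{remark:fedder:ring}, and you do not do this; you flag it yourself as needing ``more laborious bookkeeping.'' Without that identification, your $\binom{q+1}{2}$ linearly independent elements $f_{s,t}$, all of degree $4(q-1)$, could a priori sit strictly above the generating degree and fail to generate. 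Note that your linear-independence computation is correct and self-contained, so once the degree is pinned down the argument closes; but as written the step is a genuine missing piece. The paper avoids the bookkeeping entirely by a different device: it writes down the candidate $R$-linear map $(x,y,z)^{q-1}R\to(I^{[q]}:_AI)/I^{[q]}$, $x^{q-1-s-t}y^sz^t\mapsto f_{s,t}$, observes that source and target are both reflexive rank-one $R$-modules, and then only needs to check the map is an isomorphism in codimension one, which after inverting $x$ reduces to the obvious statement $IA_x=(\Delta_2,\Delta_3)A_x$. Checking on a codimension-one locus is much less information than a full degree-by-degree comparison, which is exactly why the paper's route is shorter.
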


For $q=p$, the above recovers Fedder's computation that $I^{[p]}:I\ =\ I^{2p-2}+I^{[p]}$, though for $q>p$, the ideal $I^{[p]}:I$ is strictly bigger than $I^{2p-2}+I^{[p]}$.

\begin{proof} (1) Note that the element
\[
y^sz^t(\Delta_2\Delta_3)^{q-1}\ =\ y^sz^t(wx-uz)^{q-1}(uy-vx)^{q-1}
\]
belongs to the ideals
\[
(x,u)^{2q-2}\ \subseteq\ (x^{q-1},u^q)\ \subseteq\ (x^{s+t},u^q)\,,
\]
and also to
\[
y^sz^t(x,z)^{q-1}(x,y)^{q-1}\ \subseteq\ y^sz^t(x^t,z^{q-t})(x^s,y^{q-s})
\ \subseteq\ (x^{s+t},z^q,y^q)\,.
\]
Hence,
\begin{align*}
y^sz^t(\Delta_2\Delta_3)^{q-1}\
&\ \in\ \big(x^{s+t},\ u^q\big)A\ \cap\ \big(x^{s+t},\ z^q,\ y^q\big)A\\
&\ =\ \big(x^{s+t},\ u^qz^q,\ u^qy^q\big)A\\
&\ \subseteq\ \big(x^{s+t},\ \Delta_1^q,\ \Delta_2^q,\ \Delta_3^q\big)A\,.
\end{align*}

(2) The ideals $I$ and $I^{[q]}$ have the same associated primes, \cite[Corollary~21.11]{24hours}. As $I$ is prime, it is the only prime associated to $I^{[q]}$. Hence $x^{s+t}$ is a nonzerodivisor modulo $I^{[q]}$, and it follows that $f_{s,t}\mod I^{[q]}$ is well-defined.

We next claim that
\[
I^{2q-1}\subseteq I^{[q]}\,.
\]
By the earlier observation on associated primes, it suffices to verify this in the local ring~$R_I$. But $R_I$ is a regular local ring of dimension $2$, so $IR_I$ is generated by two elements, and the claim follows from the pigeonhole principle. The claim implies that
\[
x^{s+t}f_{s,t}\, I\ \in\ I^{[q]}\,,
\]
and using, again, that $x^{s+t}$ is a nonzerodivisor modulo $I^{[q]}$, we see that $f_{s,t}I\subseteq I^{[q]}$, in other words, that $f_{s,t}\in I^{[q]}:_AI$ as desired.

By Theorem~\ref{theorem:main} and Remark~\ref{remark:fedder:ring}, one has the $R$-module isomorphisms
\[
\omega_R^{(1-q)}\ \cong\ \calF^e(E)\ \cong\ \frac{I^{[q]}:_AI}{I^{[q]}}\,.
\]
Choosing $\omega_R^{(-1)}=(x,y,z)R$, we claim that the map
\begin{align*}
(x,y,z)^{q-1}R\ &\to\ \frac{I^{[q]}:_AI}{I^{[q]}}\\
x^{q-1-s-t}y^sz^t\ &\mapsto\ f_{s,t}
\end{align*}
is an isomorphism. Since the modules in question are reflexive $R$-modules of rank one, it suffices to verify that the map is an isomorphism in codimension $1$. Upon inverting~$x$, the above map induces
\begin{align*}
R_x\ &\to\ \frac{I^{[q]}A_x:_{A_x}IA_x}{I^{[q]}A_x}\\
x^{q-1}\ &\mapsto\ (\Delta_2\Delta_3)^{q-1}
\end{align*}
which is readily seen to be an isomorphism since $IA_x=(\Delta_2,\Delta_3)A_x$.
\end{proof}

\section{Cartier algebras and gauge boundedness}
\label{sec:gauge}

For a ring $R$ of prime characteristic $p>0$, one can interpret $\calF^e(E)$ in a dual way as a collection of $p^{-e}$-linear operators on $R$. This point of view was studied by Blickle~\cite{Blickle:JAG} and Schwede~\cite{Schwede}.

\begin{definition}
Let $R$ be a ring of prime characteristic $p>0$. For each $e\ge0$, set $\calC^R_e$ to be set of additive maps $\phi\colon R\to R$ satisfying
\[
\phi(r^{p^e}x)\ =\ r\phi(x)\qquad\text{ for }r,x\in R\,.
\]
The \emph{total Cartier algebra} is the direct sum
\[
\calC^R\ =\ \bigoplus_{e\ge0}\calC^R_e\,.
\]
\end{definition}

For $\phi\in\calC^R_{e}$ and $\phi'\in\calC^R_{e'}$, the compositions $\phi\circ\phi'$ and $\phi'\circ\phi$ are elements of $\calC^R_{e+e'}$. This gives $\calC^R$ the structure of an $\NN$-graded ring; it is typically not a commutative ring. As pointed out in~\cite[2.2.1]{ABZ}, if $(R,\frakm)$ is an $F$-finite complete local ring, then the ring of Frobenius operators $\calF(E)$ is isomorphic to $\calC^R$.

Each $\calC^R_e$ has a left and a right $R$-module structure: for $\phi\in\calC^R_e$ and $r\in R$, we define $r\cdot\phi$ to be the map $x\mapsto r\phi(x)$, and $\phi\cdot r$ to be the map $x\mapsto\phi(rx)$.

\begin{definition}
Blickle~\cite{Blickle:JAG} introduced a notion of boundedness for Cartier algebras: Let~$R=A/I$ for a polynomial ring $A=\FF[x_1,\dots,x_d]$ over an $F$-finite field $\FF$. Set $R_n$ to be the finite dimensional $\FF$-vector subspace of $R$ spanned by the images of the monomials
\[
x_1^{\lambda_1}\cdots x_d^{\lambda_d}\qquad\text{ for }\ 0\le \lambda_j\le n\,.
\]
Following \cite{Anderson} and \cite{Blickle:JAG}, we define a map $\delta\colon R\to\ZZ$ by $\delta(r)=n$ if $r\in R_n\smallsetminus R_{n-1}$; the map~$\delta$ is a \emph{gauge}. If $I=0$, then $\delta(r)\le\deg(r)$ for each $r\in R$. We recall some properties from \cite[Proposition~1]{Anderson} and \cite[Lemma~4.2]{Blickle:JAG}:
\begin{align*}
\delta(r+r')\ &\le \ \max\{\delta(r),\ \delta(r')\}\,,\\
\delta(r\cdot r')\ &\le \ \delta(r)+\delta(r')\,.
\end{align*}

The ring $\calC^R$ is \emph{gauge bounded} if there exists a constant $K$, and elements $\phi_{e,i}$ in $\calC^R_e$ for each $e \ge 1$ generating $\calC^R_e$ as a left $R$-module, such that
\[
\delta(\phi_{e,i}(x))\ \le\ \frac{\delta(x)}{p^e}+K\qquad\text{ for each }e\text{ and }i\,.
\]
\end{definition}

\begin{remark}
We record two key facts that will be used in our proof of Theorem~\ref{theorem:gauge}:

\begin{enumerate}[\rm(1)]
\item If there exists a constant $C$ such that $I^{[p^e]}:_AI$ is generated by elements of degree at most $Cp^e$ for each $e\ge1$, then $\calC^R$ is gauge bounded; this is \cite[Lemma~2.2]{KatzmanZhang}.

\item If $\calC^R$ is gauge bounded, then for each ideal $\fraka$ of $R$, the $F$-jumping numbers of $\tau(R,\fraka^t)$ are a subset of the real numbers with no limit points; in particular, they form a discrete set. This is \cite[Theorem~4.18]{Blickle:JAG}.
\end{enumerate}
\end{remark}

We now prove the main result of the section:

\begin{theorem}
\label{theorem:gauge}
Let $R$ be a normal $\NN$-graded that is finitely generated over an $F$-finite field~$R_0$. (The ring $R$ need not be standard graded.)

Suppose that the anticanonical cover of $R$ is finitely generated as an $R$-algebra. Then~$\calC^R$ is gauge bounded. Hence, for each ideal $\fraka$ of $R$, the set of $F$-jumping numbers of $\tau(R,\fraka^t)$ is a subset of the real numbers with no limit points.
\end{theorem}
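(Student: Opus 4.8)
Write $R=A/I$ with $A=\FF[x_1,\dots,x_d]$ a (possibly weighted) polynomial ring over the $F$-finite field $\FF=R_0$. By the two facts recorded in the remark preceding the theorem, the whole statement reduces to a single estimate: it suffices to produce a constant $C$, independent of $e$, such that $I^{[p^e]}:_AI$ is generated by elements of degree at most $Cp^e$ for every $e\ge1$. Fact (1) then gives that $\calC^R$ is gauge bounded, and fact (2) gives that the $F$-jumping numbers of each $\tau(R,\fraka^t)$ form a set with no limit points, which is the remaining assertion. So everything hinges on a uniformly linear, in $p^e$, bound on the generator degrees of $I^{[p^e]}:_AI$.

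To reach those ideals, I would pass through $E$. Completing at $\frakm$ is harmless: $\hat A$ is a complete regular local ring, $\hat R=\hat A/I\hat A$ is again normal, $(I^{[p^e]}:_AI)\hat A=(I\hat A)^{[p^e]}:_{\hat A}(I\hat A)$ by flat base change, and a homogeneous generating set of the graded $A$-ideal $I^{[p^e]}:_AI$ still generates after completion, so its degrees can be read off from the graded $\hat R$-module $(I^{[p^e]}:_AI)/I^{[p^e]}$. Now $I^{[p^e]}$ is generated by the $p^e$-th powers of a fixed homogeneous generating set of $I$, hence in degrees $\le d_0p^e$ with $d_0$ the top degree among those generators; and Remark~\ref{remark:blickle} combined with Theorem~\ref{theorem:main} identifies $(I^{[p^e]}:_AI)/I^{[p^e]}\cong\calF^e(E)\cong\omega_R^{(1-p^e)}$ as graded modules, up to a degree shift that grows at most linearly in $p^e$ (in the polynomial-ring case the shift has size $(p^e-1)\sum_i\deg x_i$). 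Thus the problem reduces to showing that $\omega_R^{(1-p^e)}$ is generated, as a left $R$-module, in degrees bounded by a constant times $p^e$.

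This last step is exactly where finite generation of the anticanonical cover $\calR=\bigoplus_{n\ge0}\omega_R^{(-n)}$ is used. Since $\calR$ is a finitely generated $R$-algebra and $R$ is finitely generated over $R_0$, the ring $\calR$ is generated over $R_0$ by finitely many bihomogeneous elements $g_1,\dots,g_s$, with $g_i\in\omega_R^{(-n_i)}$, $n_i\ge1$, and $g_i$ of some internal degree $\delta_i\in\ZZ$; set $\Delta=\max_i\max\{\delta_i,0\}$. Because $\calR_0=R$, every element of $\omega_R^{(-n)}=\calR_n$ is an $R$-linear combination of monomials $g_1^{a_1}\cdots g_s^{a_s}$ with $\sum_ia_in_i=n$; each $n_i\ge1$ forces $\sum_ia_i\le n$, so such a monomial has internal degree $\sum_ia_i\delta_i\le\Delta\sum_ia_i\le\Delta n$. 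Taking $n=p^e-1$ shows $\omega_R^{(1-p^e)}$ is generated over $R$ in degrees $\le\Delta(p^e-1)$, which is linear in $p^e$. Feeding this back through the previous paragraph produces the constant $C$, and facts (1) and (2) then finish the proof.

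The main obstacle, I expect, is the bookkeeping in the middle paragraph. Theorem~\ref{theorem:main} and Remark~\ref{remark:blickle} are stated for complete local rings, so one must either record the routine graded analogues or argue, as above, that completion does not disturb the generator degrees, and then pin down that the shift relating $(I^{[p^e]}:_AI)/I^{[p^e]}$ to $\omega_R^{(1-p^e)}$ really is $O(p^e)$ rather than larger --- a genuinely linear bound is precisely what the cited criterion demands. A secondary point is to confirm that that criterion (fact (1), from \cite{KatzmanZhang}) applies to the weighted polynomial ring $A$ and its associated gauge, which the hypothesis that $R$ need not be standard graded forces us to allow; this should go through, since the subadditivity of $\delta$ and the degree-dividing behaviour of the top Cartier operator on $A$ hold verbatim for arbitrary positive weights.
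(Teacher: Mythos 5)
Your proposal is correct and follows essentially the same route as the paper: reduce to a linear-in-$p^e$ bound on the degrees of generators of $I^{[p^e]}:_AI$, identify $(I^{[p^e]}:_AI)/I^{[p^e]}$ with $\omega^{(1-p^e)}$ up to a controlled graded shift, use finite generation of the anticanonical cover to bound the generator degrees of $\omega^{(1-p^e)}$ by a constant times $p^e-1$, and then invoke \cite[Lemma~2.2]{KatzmanZhang} and \cite[Theorem~4.18]{Blickle:JAG}. The only difference is cosmetic: the paper absorbs the degree shift by regrading $\omega$ once, whereas you track it explicitly as $O(p^e)$, which gives the same linear estimate.
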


\begin{proof}
Let $A$ be a polynomial ring, with a possibly non-standard $\NN$-grading, such that $R=A/I$. It suffices to obtain a constant $C$ such that the ideals $I^{[p^e]}:_AI$ are generated by elements of degree at most $Cp^e$ for each $e\ge 1$.

There exists a ring isomorphism $\bigoplus_{e\ge0}\omega^{(1-p^e)}\cong\bigoplus_{e\ge0}(I^{[p^e]}:_AI)/I^{[p^e]}$ by Remark~\ref{remark:fedder:ring} that respects the $e$-th graded components. After replacing $\omega$ by an isomorphic $R$-module with a possible graded shift, we may assume that the isomorphism above induces degree preserving $R$-module isomorphisms $\omega^{(1-p^e)}\cong(I^{[p^e]}:_AI)/I^{[p^e]}$ for each $e\ge0$. While $\omega$ is no longer canonically graded, we still have the finite generation of the anticanonical cover $\bigoplus_{n\ge0}\omega^{(-n)}$. It suffices to check that there exists a constant $C$ such that $\omega^{(1-p^e)}$ is generated, as an $R$-module, by elements of degree at most $Cp^e$.

Choose finitely many homogeneous $R$-algebra generators $z_1,\dots,z_k$ for $\bigoplus_{n\ge0}\omega^{(-n)}$, say with $z_i\in\omega^{(-j_i)}$. Set $C$ to be the maximum of $\deg z_1,\dots,\deg z_k$. Then the monomials
\[
\bsz^\bslambda\ =\ z_1^{\lambda_1}z_2^{\lambda_2}\cdots z_k^{\lambda_k} \qquad\text{ with }\sum\lambda_ij_i=p^e-1
\]
generate the $R$-module $\omega^{(1-p^e)}$, and it is readily seen that
\[
\deg\bsz^\bslambda\ =\ \sum\lambda_i\deg z_i\ \le\ C\sum\lambda_i\ \le\ C(p^e-1)\,.
\]
By \cite[Lemma~2.2]{KatzmanZhang}, it follows that $\calC^R$ is gauge bounded; the assertion now follows from \cite[Theorem~4.18]{Blickle:JAG}.
\end{proof}

\begin{corollary}
Let $R$ be the determinantal ring $\FF[X]/I$, where $X$ is a matrix of indeterminates over an $F$-finite field $\FF$ of prime characteristic, and $I$ is the ideal generated by the minors of~$X$ of an arbitrary but fixed size. Then, for each ideal $\fraka$ of $R$, the set of $F$-jumping numbers of $\tau(R,\fraka^t)$ is a subset of the real numbers with no limit points.
\end{corollary}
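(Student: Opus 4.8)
The strategy is to invoke Theorem~\ref{theorem:gauge}. A determinantal ring $R=\FF[X]/I$, with $X$ an $m\times n$ matrix of indeterminates and $I=I_t(X)$ the ideal of size-$t$ minors, is a normal, standard-graded, finitely generated $\FF$-algebra with $R_0=\FF$ an $F$-finite field, so all the standing hypotheses of Theorem~\ref{theorem:gauge} hold. Hence it suffices to verify the one remaining hypothesis: that the anticanonical cover $\bigoplus_{n\ge 0}\omega_R^{(-n)}$ is a finitely generated $R$-algebra. Granting this, Theorem~\ref{theorem:gauge} gives that $\calC^R$ is gauge bounded, and the conclusion about $F$-jumping numbers follows at once from \cite[Theorem~4.18]{Blickle:JAG}.

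So the whole problem reduces to finite generation of the anticanonical cover. If $t>\min(m,n)$ then $I=0$ and $R$ is a polynomial ring, while if $m=n$ then $R$ is Gorenstein; in either case $\omega_R$ is free of rank one, so the anticanonical cover is a polynomial extension of $R$. Assume then $2\le t\le\min(m,n)$ with $m\ne n$; transposing $X$ if necessary, $m<n$. By the computation of the divisor class group of generic determinantal rings in \cite[Chapter~7]{BV}, $\mathrm{Cl}(R)\cong\ZZ$, generated by the class of the height-one prime $\frakp$ generated by the size-$(t-1)$ minors of the first $t-1$ columns of $X$, and $[\omega_R]$ is a fixed nonzero integer multiple of $[\frakp]$; the inverse class $-[\frakp]$ is represented by the conjugate prime $\frakq$ formed from the first $t-1$ rows of $X$. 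Consequently, after multiplication by a suitable (class-trivial) principal fractional ideal, $\omega_R^{(-n)}$ equals the $cn$-th symbolic power of one fixed prime among $\frakp,\frakq$, for a fixed positive integer $c$.

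Now the key input: by \cite[Corollary~7.10]{BV} — the same fact used for the $2\times 3$ case in Section~\ref{sec:determinantal} — the symbolic powers of $\frakp$ and of $\frakq$ coincide with their ordinary powers. Therefore $\bigoplus_{n\ge0}\omega_R^{(-n)}$ is, up to a graded rescaling, the ordinary Rees algebra of the ideal $\frakp^{c}$ (or $\frakq^{c}$), hence a finitely generated $R$-algebra. Theorem~\ref{theorem:gauge} now applies and yields the corollary.

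The one place that requires genuine care is the middle step: pinning down $\mathrm{Cl}(R)$, the canonical class, and — crucially — which of the two conjugate height-one primes represents the anticanonical generator, so that the relevant reflexive powers of $\omega_R$ are identified with honest, positively indexed symbolic powers of an actual prime ideal rather than formal inverses. All of this is contained in \cite[Chapter~7]{BV}, but it must be assembled with attention to the shape $m$ versus $n$ and to the degenerate values of $t$. The ring-theoretic heart of the argument, namely that these symbolic powers are ordinary powers and hence assemble into a Noetherian Rees algebra, is then immediate from the cited corollary.
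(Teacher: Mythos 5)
Your proposal is correct and follows essentially the same route as the paper: reduce via Theorem~\ref{theorem:gauge} to finite generation of the anticanonical cover, and obtain that from \cite[Corollary~7.10]{BV}, which gives that the relevant symbolic powers coincide with ordinary powers. The only difference is expository — you spell out the degenerate cases and the identification of the anticanonical class with (a multiple of) one of the standard height-one primes $\frakp$, $\frakq$, whereas the paper compresses this into a one-line appeal to \cite[Corollary~7.10]{BV} applied directly to $\omega^{(-1)}$.
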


\begin{proof}
Since $R$ is a determinantal ring, the symbolic powers of the ideal $\omega^{(-1)}$ agree with the ordinary powers by \cite[Corollary~7.10]{BV}. Hence the anticanonical cover of $R$ is finitely generated, and the result follows from Theorem~\ref{theorem:gauge}.
\end{proof}

\begin{remark}
It would be natural to remove the hypothesis that $R$ is graded in Theorem~\ref{theorem:gauge}. However, we do not know how to do this: when $R$ is not graded, it is unclear if one can choose gauges that are compatible with the ring isomorphism
\[
\bigoplus_{e\ge0}\omega^{(1-p^e)}\ \cong\ \bigoplus_{e\ge0}(I^{[p^e]}:_AI)/I^{[p^e]}\,.
\]
\end{remark}

\section{Linear growth of Castelnuovo-Mumford regularity for rings of finite Frobenius representation type}
\label{sec:regularity}

Let $A$ be a standard graded polynomial ring over a field $\FF$, with homogeneous maximal ideal $\frakm$. We recall the definition of the Castelnuovo-Mumford regularity of a graded module following~\cite[Chapter~4]{Eisenbud}:

\begin{definition}
Let $M=\bigoplus_{d\in\QQ} M_d$ be a graded $A$-module. If $M$ is Artinian, we set
\[
\reg M\ =\ \max\{d\mid M_d\neq 0\}\,;
\]
for an arbitrary graded module we define
\[
\reg M\ =\ \max_{k\ge 0}\{\reg H^k_\frakm(M)+k\}\,.
\]
\end{definition}

\begin{definition}
Let $I$ and $J$ be homogeneous ideals of $A$. We say that the regularity of~$A/(I+J^{[p^e]})$ has \emph{linear growth} with respect to $p^e$, if there is a constant $C$, such that
\[
\reg A/(I+J^{[p^e]})\ \le\ Cp^e\qquad\text{ for each }e\ge 0\,.
\]
It follows from \cite[Corollary~2.4]{KatzmanZhang} that if $\reg A/(I+J^{[p^e]})$ has linear growth for each homogeneous ideal $J$, then $\calC^{A/I}$ is gauge-bounded.
\end{definition}

\begin{remark}
Let $R=A/I$ for a homogeneous ideal $I$. We define a grading on the bimodule~$R^{(e)}$ introduced in Remark~\ref{remark:adjoint}: when an element $r$ of $R$ is viewed as an element of
$R^{(e)}$, we denote it by $r^{(e)}$. For a homogeneous element $r\in R$, we set
\[
\deg'r^{(e)}\ =\ \frac{1}{p^e}\deg r\,.
\]
For each ideal $J$ of $R$, one has an isomorphism
\[
\CD
R^{(e)}\otimes_R R/J@>\cong>>R/J^{[p^e]}
\endCD
\]
under which ${r^{(e)}}\otimes\bar{s}\mapsto\bar{rs^{p^e}}$. To make this isomorphism degree-preserving for a homogeneous ideal $J$, we define a grading on $R/J^{[p^e]}$ as follows:
\[
\deg'\bar{r}\ =\ \frac{1}{p^e}\deg\bar{r}\qquad\text{ for a homogeneous element }r\text{ of }R\,.
\]
\end{remark}

The notion of finite Frobenius representation type was introduced by Smith and Van den Bergh~\cite{SV}; we recall the definition in the graded context:

\begin{definition}
Let $R$ be an $\NN$-graded Noetherian ring of prime characteristic $p$. Then $R$ has \emph{finite graded Frobenius-representation type} by finitely generated $\QQ$-graded $R$-modules $M_1,\dots,M_s$, if for every $e\in\NN$, the $\QQ$-graded $R$-module $R^{(e)}$ is isomorphic to a finite direct sum of the modules $M_i$ with possible graded shifts, i.e., if there exist rational numbers $\alpha^{(e)}_{ij}$, such that there exists a~$\QQ$-graded isomorphism
\[
R^{(e)}\ \cong\ \bigoplus_{i,j}M_i\big(\alpha^{(e)}_{ij}\big)\,.
\]
\end{definition}

\begin{remark}
\label{remark:bounds:shifts}
Suppose $R$ has finite graded Frobenius-representation type. With the notation as above, there exists a constant $C$ such that
\[
\alpha^{(e)}_{ij}\ \le\ C\qquad\text{ for all }e,i,j\,;
\]
see the proof of~\cite[Theorem~2.9]{TT}.
\end{remark}

We now prove the main result of this section; compare with~\cite[Theorem~4.8]{TT}.

\begin{theorem}
Let $A$ be a standard graded polynomial ring over an $F$-finite field of characteristic $p>0$. Let $I$ be a homogeneous ideal of $A$.

Suppose $R=A/I$ has finite graded $F$-representation type. Then $\reg A/(I+J^{[p^e]})$ has linear growth for each homogeneous ideal $J$. In particular,~$\calC^R$ is gauge bounded, and for each ideal $\fraka$ of $R$, the set of $F$-jumping numbers of $\tau(R,\fraka^t)$ is a subset of the real numbers with no limit points.
\end{theorem}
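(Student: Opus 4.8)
The plan is to use finite graded $F$-representation type to split $A/(I+J^{[p^e]})$ into finitely many pieces of bounded shape, and to extract linear growth after undoing the grading rescaling $\deg'=p^{-e}\deg$ that makes the splitting degree-preserving. First I would note that $A/(I+J^{[p^e]})=R/\bar J^{[p^e]}$, where $\bar J=JR$ is the image of $J$ in $R$; this is just the factorization $A\to R\to R/\bar J^{[p^e]}$. The degree-preserving isomorphism $R^{(e)}\otimes_R(R/\bar J)\cong R/\bar J^{[p^e]}$ recalled before the definition of finite graded $F$-representation type identifies $R/\bar J^{[p^e]}$, equipped with the grading $\deg'\bar r=p^{-e}\deg\bar r$, with $R^{(e)}\otimes_R(R/\bar J)$. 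Feeding in the defining isomorphism $R^{(e)}\cong\bigoplus_{i,j}M_i(\alpha^{(e)}_{ij})$ of finite graded $F$-representation type and tensoring over $R$ with $R/\bar J$ yields a $\QQ$-graded isomorphism
\[
R/\bar J^{[p^e]}\ \cong\ \bigoplus_{i,j}\big(M_i/\bar J M_i\big)\big(\alpha^{(e)}_{ij}\big)\,.
\]

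Next I would bound the regularity of the right-hand side with respect to $\deg'$, uniformly in $e$. There are only finitely many modules $M_1,\dots,M_s$, each $M_i/\bar J M_i$ is a finitely generated $\QQ$-graded $A$-module, so $C_0:=\max_i\reg(M_i/\bar J M_i)$ is finite and independent of $e$ (it does depend on $J$). By Remark~\ref{remark:bounds:shifts}, together with the complementary lower bound --- which follows from the fact that $R^{(e)}$ is $\NN$-graded and is generated, over $R$, in $\deg'$-degrees lying in a fixed bounded interval, forcing each summand $M_i(\alpha^{(e)}_{ij})$ to be generated in that range --- there is a constant $C_1$, independent of $e$, with $|\alpha^{(e)}_{ij}|\le C_1$ for all $e,i,j$. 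Since the regularity of a finite direct sum is the maximum of the regularities of the summands and $\reg M(a)=\reg M-a$, we get $\reg_{\deg'}\big(R/\bar J^{[p^e]}\big)\le C_0+C_1$, a constant independent of $e$.

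Finally I would unwind $\deg'=p^{-e}\deg$. Writing $N=A/(I+J^{[p^e]})$ and using $\reg N=\max_{0\le k\le\dim R}\{\reg H^k_\frakm(N)+k\}$, each $H^k_\frakm(N)$ is Artinian, so its highest nonzero degree with respect to $\deg$ is exactly $p^e$ times its highest nonzero degree with respect to $\deg'$; hence $\reg_{\deg}N\le p^e(C_0+C_1)+\dim R\le Cp^e$ for a suitable constant $C$ and all $e\ge0$. This is the asserted linear growth. The remaining assertions are then formal: linear growth of $\reg A/(I+J^{[p^e]})$ for every homogeneous ideal $J$ gives that $\calC^R$ is gauge bounded by \cite[Corollary~2.4]{KatzmanZhang}, and gauge boundedness gives that, for each ideal $\fraka$ of $R$, the $F$-jumping numbers of $\tau(R,\fraka^t)$ form a subset of the reals with no limit points by \cite[Theorem~4.18]{Blickle:JAG}.

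The main obstacle is the grading bookkeeping. One must carefully track the $\QQ$-grading on $R^{(e)}$ defined by $\deg'$, verify that both the finite-representation-type isomorphism and the tensor product with $R/\bar J$ are compatible with it, and --- crucially --- control the shifts $\alpha^{(e)}_{ij}$ on the side that bounds regularity from above, which is exactly where the finiteness of $\{M_1,\dots,M_s\}$ and the $\NN$-gradedness of $R^{(e)}$ are used. The return trip from $\deg'$ to the standard grading is what manufactures the factor $p^e$; the ``$+k$'' in the definition of regularity survives only as the bounded additive term $\dim R$, which is harmless for the purpose of linear growth.
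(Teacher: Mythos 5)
Your proposal follows the same route as the paper: identify $A/(I+J^{[p^e]})$ with $R^{(e)}\otimes_R R/J$ under the $\deg'=p^{-e}\deg$ grading, split via $R^{(e)}\cong\bigoplus_{i,j}M_i(\alpha^{(e)}_{ij})$ so that $R/J^{[p^e]}\cong\bigoplus_{i,j}(M_i/JM_i)(\alpha^{(e)}_{ij})$, use boundedness of the shifts and finiteness of $\{M_i\}$ to bound $\deg'$-regularity uniformly in $e$, and then multiply by $p^e$ (the $+k$ in the regularity definition contributing only a harmless $\dim A$). The only cosmetic difference is that you bound $\reg$ of the whole module directly whereas the paper records the $a'$-numbers of each $H^k_\frakm$ first; this is the same computation.

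One detail deserves care. You argue that the ``complementary lower bound'' on $\alpha^{(e)}_{ij}$ follows from the $\NN$-gradedness of $R^{(e)}$ in $\deg'$. Under your stated convention $\reg M(a)=\reg M - a$ (i.e., $M(a)_d=M_{d+a}$), what you need in order to bound $\reg(M_i/JM_i)(\alpha^{(e)}_{ij})$ from above is indeed a \emph{lower} bound on $\alpha^{(e)}_{ij}$; but $\NN$-gradedness of $R^{(e)}$ yields, for a graded direct summand $M_i(\alpha^{(e)}_{ij})$ supported in $\deg'\ge 0$, the inequality $\alpha^{(e)}_{ij}\le \min\{n: (M_i)_n\neq 0\}$, which is another \emph{upper} bound, not the lower bound you want. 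So that parenthetical argument does not deliver. What you actually need is exactly the bound supplied by Remark~\ref{remark:bounds:shifts} via the proof of \cite[Theorem~2.9]{TT}: under the shift convention in which $a'(N(a))=a'(N)+a$ (equivalently $\reg M(a)=\reg M+a$), the Remark's inequality $\alpha^{(e)}_{ij}\le C$ is precisely what caps $\reg$ of each shifted summand. With your opposite sign convention the very same TT bound translates into the lower bound $\alpha^{(e)}_{ij}\ge -C$ you need. In short: cite the TT bound for the direction you actually use, and discard the $\NN$-gradedness step; it is not needed and, as written, proves the wrong inequality. With that repair your proof coincides with the paper's.
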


\begin{proof}
We use $J$ for the ideal of $A$, and also for its image in $R$. Let $a'(H^k_\frakm(R/J^{[p^e]}))$ denote the largest degree of a nonzero element of $H^k_\frakm(R/J^{[p^e]})$ under the $\deg'$-grading, i.e.,
\[
a'(H^k_\frakm(R/J^{[p^e]}))\ =\ \frac{1}{p^e}\reg H^k_\frakm(R/J^{[p^e]})\,.
\]
Since we have degree-preserving isomorphisms $R^{(e)}\otimes_R R/J\cong R/J^{[p^e]}$, and
\[
R^{(e)}\ \cong\ \bigoplus_{i,j}M_i(\alpha^{(e)}_{ij})\,,
\]
it follows that
\begin{align*}
H^k_\frakm(R/J^{[p^e]})\ &\cong\ H^k_\frakm(R^{(e)}\otimes_R R/J)\\
&\cong\ \bigoplus_{i,j}H^k_\frakm\big(M_i(\alpha^{(e)}_{ij})\otimes_R R/J\big)\\
&\cong\ \bigoplus_{i,j}H^k_\frakm(M_i/JM_i)(\alpha^{(e)}_{ij})\,.
\end{align*}
The numbers $\alpha^{(e)}_{ij}$ are bounded by Remark~\ref{remark:bounds:shifts}; thus,
\[
a'(H^k_\frakm(R/J^{[p^e]}))\ \le\ \max_i\{a'(H^k_{\frakm}(M_i/JM_i))+C\}\,.
\]
Since there are only finitely many modules $M_i$ and finitely many homological indices $k$, it follows that $a'(H^k_\frakm(R/J^{[p^e]}))\le C'$, where $C'$ is a constant independent of $e$ and $k$. Hence
\[
\reg H^k_\frakm(R/J^{[p^e]})\ \le\ C'p^e\qquad\text{ for all }e,k\,,
\]
and so
\[
\reg A/(I+J^{[p^e]})\ =\ \max_k\{\reg H^k_\frakm(R/J^{[p^e]})+k\}\le C'p^e+\dim A\,.
\]
This proves that $\reg A/J^{[p^e]}$ has linear growth; \cite[Corollary~2.4]{KatzmanZhang} implies that $\calC^R$ is gauge bounded, and the discreetness assertion follows from \cite[Theorem~4.18]{Blickle:JAG}.
\end{proof}


\end{document}